\let\oldendproof\endproof
\renewcommand\endproof{~\hfill$\qed$\oldendproof}
\newcommand{\Or}{O}        
\newcommand{\godd}{G^{odd}}      
\newcommand{\lw}{1.3pt}
\newcommand{\pw}{2.0pt}
\begin{document}
\title{Odd wheels are not odd-distance graphs}
\author{G\'abor Dam\'asdi\inst{1}\orcidID{0000-0002-6390-5419}}
\authorrunning{G. Dam\'asdi}
\institute{ MTA-ELTE Lend\"ulet Combinatorial Geometry Research Group \\ \email{damasdigabor@caesar.elte.hu} }
\maketitle              
\begin{abstract}
 An odd wheel graph is a graph formed by connecting a new vertex to all vertices of an odd cycle. We answer a question of Rosenfeld and Le by showing that odd wheels cannot be drawn in the plane such that the lengths of the edges are odd integers.

\keywords{Geometric graphs \and Odd-distance graphs \and Forbidden subgraphs.}
\end{abstract}

\section{Introduction}

 A \emph{geometric graph} is a graph drawn in the plane so that the vertices are represented by distinct points and the edges are represented by possibly intersecting straight line segments connecting the corresponding points. A \emph{unit-distance graph} is a geometric graph where all edges are represented by segments of length 1. The study of unit-distance graphs started with the question of Edward Nelson, who raised the problem of determining the minimum number of colors that are needed to color the points of the plane so that no two points unit distance apart are assigned the same color. This number is known as the chromatic number of the plane. Until recently the best lower bound was 4, but Aubrey de Grey \cite{MR3820926} constructed a unit-distance graph which cannot be colored with $4$ colors. The best upper bound is 7. For more details on unit-distance graphs see for example \cite{MR2458293}. 
 
 Erd\H{o}s \cite{MR15796}   raised the problem to determine the maximal number of edges in a unit-distance graph with $n$ vertices and this question became known as the Erd\H{o}s Unit Distance Problem.
 
 Later Erd\H{o}s and Rosenfeld \cite{MR2519871} asked the same two questions for odd distances. Namely, let $\godd$ be the graph whose vertex set is the plane and two vertices are connected if their distance is an odd integer. They asked to determine the chromatic number of $\godd$, and to determine how many distances among $n$ points in the plane can be odd integers.

 Analogously we define \emph{odd-distance graphs} to be the geometric graphs having an embedding in the Euclidean plane in which  all edges are of odd integer length. In other words, the odd-distance graphs are the finite subgraphs of $\godd$. There are odd-distance graphs whose chromatic number is five \cite{MR2519871,MR3820926} but contrary to the unit distance case we do not have any upper bound. The chromatic number of $\godd$ might be infinite. In the case when we require the color classes to be measurable sets, it has been shown that the chromatic number is indeed infinite \cite{MR2438995,MR2505106}.    
 
  Four points in the plane with pairwise odd integer distances do not exist, hence $K_4$ is not an odd-distance graph. From Tur\'an's theorem we know that the complete tripartite graph $K_{n,n,n}$ has the maximal number of edges among $K_4$-free graphs.  Piepemeyer \cite{MR1397789} showed that $K_{n,n,n}$, and therefore any 3-colorable graph, is an odd-distance graph. This settles the second question of Erd\H{o}s and Rosenfeld. 
  
  Let $W_n$ be the wheel graph formed by connecting a new vertex to all vertices of a cycle on $n$ vertices.\footnote{There is some discrepancy in the literature, since some authors prefer to denote by $W_n$ the wheel graph on $n$ vertices.} The wheel graph $W_{2k}$ is 3-colorable, hence it is an odd-distance graph.

 Rosenfeld and Le \cite{MR3159068} showed that having $K_4$, which is also $W_3$, as a subgraph is not the only obstruction for being an odd-distance graph, since $W_5$ is also not an odd-distance graph. This led them to the following question: Is it true that $W_{2k+1}$ is  not a subgraph of $\godd$ for any $k$? We answer this for the affirmative.

\begin{theorem}\label{thm:mainwheel}
	The odd wheels are not odd-distance graphs. 
\end{theorem}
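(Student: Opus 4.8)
The plan is to put the hub at the origin of the complex plane and turn the fact that the cycle ``closes up'' around it into a congruence that fails modulo $4$ in a ring of $2$-adic integers.

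Suppose $W_{2k+1}$ has an odd-distance drawing; place the hub $h$ at $0\in\mathbb{C}$ and let the cycle vertices be $c_j=w_j\in\mathbb{C}\setminus\{0\}$, $j\in\mathbb{Z}/(2k+1)\mathbb{Z}$. Since $\prod_j (w_{j+1}/w_j)=1$ telescopes and $w_{j+1}\overline{w_j}=|w_j|^2\,(w_{j+1}/w_j)$, setting $a_j:=|w_j|^2=d(h,c_j)^2$ and $\zeta_j:=w_{j+1}\overline{w_j}$ gives $\prod_{j=0}^{2k}\zeta_j=\prod_{j=0}^{2k}a_j=:A$. Each $a_j$ is an odd perfect square, so $A\equiv 1\pmod 8$. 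Also $\zeta_j$ is a root of $X^2-B_jX+\tau_j^2$, where $B_j:=\zeta_j+\overline{\zeta_j}=a_j+a_{j+1}-|w_{j+1}-w_j|^2=a_j+a_{j+1}-d(c_j,c_{j+1})^2$ and $\tau_j^2:=\zeta_j\overline{\zeta_j}=a_ja_{j+1}$ with $\tau_j:=d(h,c_j)d(h,c_{j+1})$ odd; since $a_j,a_{j+1},d(c_j,c_{j+1})^2\equiv 1\pmod 8$ we get $B_j\equiv 1\pmod 8$, and $\zeta_j=\tfrac12(B_j+\sqrt{-N_j})$ with $N_j:=4\tau_j^2-B_j^2$ a positive integer (in fact $16$ times the squared area of triangle $hc_jc_{j+1}$), and a short Heron-type computation shows $N_j\equiv 3\pmod{16}$.

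Next I would work $2$-adically. Fix an embedding $\overline{\mathbb{Q}}\hookrightarrow\overline{\mathbb{Q}_2}$. Because $N_j/3\equiv 1\pmod 8$, the number $-N_j$ lies in the same square class of $\mathbb{Q}_2^\times$ as $-3$, so every $\zeta_j$ maps into the unramified quadratic extension $K=\mathbb{Q}_2(\sqrt{-3})=\mathbb{Q}_2(\omega)$, where $\omega$ is a primitive cube root of unity, $\mathcal{O}_K=\mathbb{Z}_2[\omega]$, the prime $2$ is inert, and $\sqrt{-3}=2\omega+1$. Each $\zeta_j$ has norm $\tau_j^2$, a $2$-adic unit, so $\zeta_j\in\mathcal{O}_K^\times$. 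Writing the image of $\sqrt{-N_j}$ as $v_j(2\omega+1)$ with $v_j\in\mathbb{Z}_2^\times$, $v_j^2=N_j/3\equiv 1\pmod{16}$ (hence $v_j\equiv\pm1\pmod 8$), a direct computation using $B_j\equiv 1\pmod 8$ shows $\zeta_j=\tfrac{B_j+v_j}{2}+v_j\omega\equiv-\omega^{k_j}\pmod{4\mathcal{O}_K}$ with $k_j\in\{1,2\}$ — the cases $v_j\equiv 1$ and $v_j\equiv-1\pmod 8$ giving $k_j=2$ and $k_j=1$, and the two possible choices of $\sqrt{-N_j}$ merely interchanging these. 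This modulo-$4$ computation is the technical core of the proof; I expect it to be the main obstacle, the more so because modulo $2$ alone it says only that each $\zeta_j$ is a nontrivial cube root of unity in $\mathbb{F}_4$, which is not enough.

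Finally, reducing $\prod_{j=0}^{2k}\zeta_j=A$ modulo $4$ and using $A\equiv 1\pmod 4$ gives, in $(\mathcal{O}_K/4\mathcal{O}_K)^\times$, $1\equiv\prod_{j=0}^{2k}(-\omega^{k_j})=(-1)^{2k+1}\omega^{\,\sum_j k_j}=-\omega^{\,m}$, so $\omega^{\,m}\equiv-1\pmod 4$ for $m=\sum_j k_j$. But in $\mathcal{O}_K/4\mathcal{O}_K=(\mathbb{Z}/4\mathbb{Z})[\omega]$ the powers of $\omega$ are $1$, $\omega$, $\omega^2=-1-\omega$, and none of them equals $-1$; this contradiction proves the theorem. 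The oddness of the cycle is used only through $(-1)^{2k+1}=-1$, so the argument breaks down for even wheels — as it must, since $W_{2k}$ is $3$-colorable and hence an odd-distance graph by Piepemeyer's theorem.
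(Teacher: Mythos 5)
Your proof is correct, and it takes a genuinely different route from the paper's. The paper works \emph{additively} with the angles $\theta_{i,i+1}$: it shows each characteristic is $\equiv 3 \pmod 8$ (Lemma \ref{lemma:3mod8}), invokes the Conway--Radin--Sadun Splitting Theorem \ref{thm:splitting} and Niven's Theorem \ref{thm:niven} to force the angles of each fixed residual to sum separately to a multiple of $\pi$, assigns each angle a class in $\mathbb{Z}/8\mathbb{Z}$ via $\cos\theta=\frac{m}{2p}$, and closes with a parity argument on a closed trail in an auxiliary graph built from the solutions of Lemma \ref{lemma:classadition}. You instead work \emph{multiplicatively} with $\zeta_j=w_{j+1}\overline{w_j}$, whose product around the cycle is the integer $A=\prod a_j\equiv 1\pmod 8$, and reduce this one identity modulo $4$ in the unramified quadratic extension $\mathbb{Q}_2(\omega)$; the key point, which the paper has no analogue of, is that all the square classes $-N_j\equiv 5\pmod 8$ collapse $2$-adically into the single field $\mathbb{Q}_2(\sqrt{-3})$, so no splitting theorem and no Niven are needed. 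I verified the two computations you flagged: $N_j=4\tau_j^2-B_j^2\equiv 4-1\equiv 3\pmod{16}$ (using $\tau_j^2\equiv 1\pmod 8$ and $B_j^2\equiv 1\pmod{16}$), hence $v_j^2=N_j/3\equiv 1\pmod{16}$ and $v_j\equiv\pm 1\pmod 8$, and then $\zeta_j=\frac{B_j+v_j}{2}+v_j\omega$ is $\equiv 1+\omega=-\omega^2$ or $\equiv-\omega\pmod{4\mathcal{O}_K}$ in the two cases; both check out, and $N_j\equiv 3\pmod{16}$ also rules out degenerate triangles, so each $\zeta_j$ genuinely generates $K$ and is a unit there. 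Your final observation that no power of $\omega$ equals $-1$ in $(\mathbb{Z}/4\mathbb{Z})[\omega]$ replaces the paper's bipartite trail-parity argument, and the sign $(-1)^{2k+1}$ is exactly where oddness of the wheel enters, mirroring the paper's use of $\prod r_ir_{i+1}=(\prod r_i)^2\equiv 1\pmod 4$. Your approach buys a shorter, essentially self-contained proof; the paper's buys the intermediate Lemma \ref{thm:multofpi} on wheels with arbitrary integer edge lengths, which it advertises as being of independent interest for Harborth-type questions.
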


In Section \ref{section:observ} we consider drawings of wheel graphs in general, not assuming that the edge lengths are odd numbers. We develop a number of useful lemmas, that  might prove useful for related questions. For example Harborth's conjecture asks whether all planar graphs admit a planar drawing with integer edge lengths. Since the maximal planar graphs are the triangulations, they contain many wheels. Hence, understanding the possible drawings of wheels is vital for solving the conjecture. Then, in Section \ref{section:mainwheel}, we prove Theorem \ref{thm:mainwheel}.

\section{Wheels with integer edge lengths}\label{section:observ}

\subsection*{Embeddings of wheel graphs}

Every set of $n+1$ ordered points of the plane determines an \emph{embedding} of the wheel graph $W_n$. Throughout this paper we will always assume that the center of the wheel is embedded at the origin $\Or$ and the other points are $A_1,A_2,\dots, A_n$, following the order of the vertices in the defining cycle of the wheel (see Figure \ref{fig:notation}). In the following notations the index is understood cyclically, i.e. the index $n+1$ is equivalent to the index $1$.  For example every embedding determines $n$ triangles: $OA_iA_{i+1}$ for $i\in  \{1,\dots,n\}$. These will be referred to as the triangles of the embedding. We will use the following notations:

	\begin{equation*}\label{formula:aiaj}
	r_i=|\Or A_i|, \;\;\;\; r_{i,i+1}=|A_iA_{i+1}|
	\end{equation*}
	\begin{equation*}\label{formula:theta}
	\theta_{i,i+1}=\angle A_i\Or A_{i+1} 
	\end{equation*}

 That is, the $i$-th triangle has sides of length $r_i,r_{i+1}$ and $r_{i,i+1}$, and its inner angle is $\theta_{i,i+1}$. Note that the $\theta_{i,i+1}$-s are directed angles. We do not assume planarity or even general position of the points. For example crossings are allowed, and $\Or$ does not need to be in the interior of the cycle (see Figure \ref{fig:notation}). 

	\begin{figure}[ht!]
		\centering
			\begin{tikzpicture}[line cap=round,line join=round,>=triangle 45,x=0.5cm,y=0.5cm]
		\clip(-1.5057950876280606,-1.4925604465535915) rectangle (13.019376431358536,9.769100962850517);
		\draw [line width=\lw] (5.111111111111111,3.142696805273545)-- (9.,0.);
		\draw [line width=\lw] (5.111111111111111,3.142696805273545)-- (11.105549368960405,3.4009795434360437);
		\draw [line width=\lw] (11.105549368960405,3.4009795434360437)-- (9.,0.);
		\draw [line width=\lw] (5.111111111111111,3.142696805273545)-- (6.28086763452817,6.967833361780154);
		\draw [line width=\lw] (6.28086763452817,6.967833361780154)-- (11.105549368960405,3.4009795434360437);
		\draw [line width=\lw] (5.111111111111111,3.142696805273545)-- (2.6319521776883823,8.60655725213987);
		\draw [line width=\lw] (2.6319521776883823,8.60655725213987)-- (6.28086763452817,6.967833361780154);
		\draw [line width=\lw] (2.6319521776883823,8.60655725213987)-- (0.,0.);
		\draw [line width=\lw] (0.,0.)-- (9.,0.);
		\draw (6.458407149791733,7.98064853058783) node[anchor=north west] {$A_1$};
		\draw (1.7078303765939615,9.7) node[anchor=north west] {$A_2$};
		\draw (-0.8630699947836561,0.016446293168050904) node[anchor=north west] {$A_3$};
		\draw (11.3,4.3) node[anchor=north west] {$A_5$};
		\draw (9.3366977829645,0.2679474164549913) node[anchor=north west] {$A_4$};
		\draw (5.927460333963746,3.25) node[anchor=north west] {$\theta_{4,5}$};
		\draw (3.5,3.9286859887426795) node[anchor=north west] {$\theta_{2,3}$};
		\draw (4.25,5.4) node[anchor=north west] {$\theta_{1,2}$};
		\draw (4.6,2.9) node[anchor=north west] {$\theta_{3,4}$};
		\draw (5.6200700721685966,4.43168823531656) node[anchor=north west] {$\theta_{5,1}$};
		\draw [line width=\lw] (6.28086763452817,6.967833361780154)-- (5.111111111111111,3.142696805273545);
		\draw [line width=\lw] (5.111111111111111,3.142696805273545)-- (11.105549368960405,3.4009795434360437);
		\draw [line width=\lw] (11.105549368960405,3.4009795434360437)-- (6.28086763452817,6.967833361780154);
		\draw [line width=\lw] (5.111111111111111,3.142696805273545)-- (0.,0.);
		\draw [line width=\lw] (0.,0.)-- (9.,0.0001);
		\draw [line width=\lw] (9.,0.)-- (5.111111111111111,3.142696805273545);
		\draw [line width=\lw] (2.6319521776883823,8.60655725213987)-- (6.28086763452817,6.967833361780154);
		\draw [line width=\lw] (6.28086763452817,6.967833361780154)-- (5.111111111111111,3.142696805273545);
		\draw [line width=\lw] (5.111111111111111,3.142696805273545)-- (2.6319521776883823,8.60655725213987);
		\draw [line width=\lw] (2.6319521776883823,8.60655725213987)-- (0.,0.);
		\draw [line width=\lw] (0.,0.)-- (5.111111111111111,3.142696805273545);
		\draw [line width=\lw] (5.111111111111111,3.142696805273545)-- (2.6319521776883823,8.60655725213987);
		\draw [line width=\lw] (5.111111111111111,3.142696805273545)-- (11.105549368960405,3.4009795434360437);
		\draw [line width=\lw] (11.105549368960405,3.4009795434360437)-- (9.,0.);
		\draw [line width=\lw] (9.,0.)-- (5.111111111111111,3.142696805273545);
		\begin{scriptsize}
		\draw [fill=black] (0.,0.) circle (\pw);
		\draw [fill=black] (9.,0.) circle (\pw);
		\draw [fill=black] (5.111111111111111,3.142696805273545) circle (\pw);
		\draw [fill=black] (11.105549368960405,3.4009795434360437) circle (\pw);
		\draw [fill=black] (6.28086763452817,6.967833361780154) circle (\pw);
		\draw [fill=black] (2.6319521776883823,8.60655725213987) circle (\pw);
		\end{scriptsize}
		\end{tikzpicture}
        \begin{tikzpicture}[line cap=round,line join=round,>=triangle 45,x=0.4cm,y=0.4cm]
        \clip(-0.2251651062581305,-1.29014329240519865) rectangle (11.363939868228318,10.719416807231726);
        \draw (1.7011149489750954,9.00232945224248) node[anchor=north west] {$A_1$};
        \draw (10.153888482940524,6.948558302157305) node[anchor=north west] {$A_2$};
        \draw (8.199112194326565,1.4269440625840464) node[anchor=north west] {$A_3$};
        \draw (9.916199549315813,2.874684381496547) node[anchor=north west] {$A_5$};
        \draw (6.919713772962027,10.483738150664575) node[anchor=north west] {$A_4$};
        \draw [line width=\lw] (7.896096778740233,0.8882499904305499)-- (6.549361598356508,9.44001838586718);
        \draw [line width=\lw] (6.549361598356508,9.44001838586718)-- (9.613184133729483,2.3023219298334574);
        \draw [line width=\lw] (9.613184133729483,2.3023219298334574)-- (2.8121714727916696,7.722931030877936);
        \draw [line width=\lw] (2.8121714727916696,7.722931030877936)-- (10.,6.);
        \draw [line width=\lw] (7.896096778740233,0.8882499904305499)-- (10.,6.);
        \draw [line width=\lw] (0.,0.)-- (7.896096778740233,0.8882499904305499);
        \draw [line width=\lw] (0.,0.)-- (6.549361598356508,9.44001838586718);
        \draw [line width=\lw] (0.,0.)-- (9.613184133729483,2.3023219298334574);
        \draw [line width=\lw] (10.,6.)-- (0.,0.);
        \draw [line width=\lw] (0.,0.)-- (2.8121714727916696,7.722931030877936);
        \begin{scriptsize}
        \draw [fill=black] (0.,0.) circle (\pw);
        \draw [fill=black] (6.549361598356508,9.44001838586718) circle (\pw);
        \draw [fill=black] (9.613184133729483,2.3023219298334574) circle (\pw);
        \draw [fill=black] (2.8121714727916696,7.722931030877936) circle (\pw);
        \draw [fill=black] (10.,6.) circle (\pw);
        \draw [fill=black] (7.896096778740233,0.8882499904305499) circle (\pw);
        \end{scriptsize}
        \end{tikzpicture}

		\caption{Two embeddings of the wheel graph $W_5$.}
		\label{fig:notation}
	\end{figure}
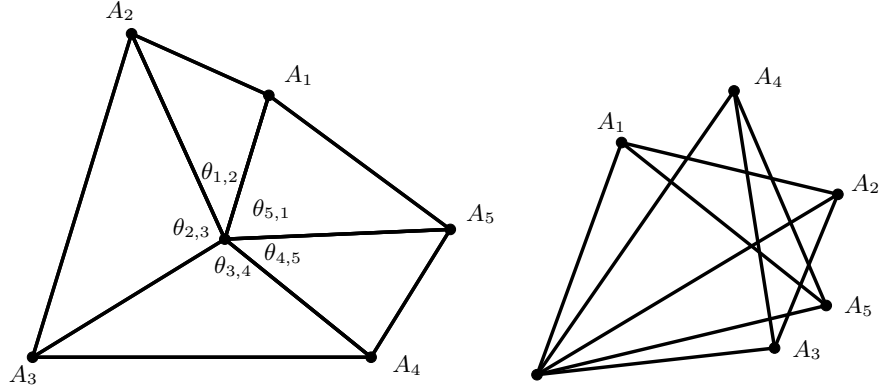

\subsection*{Geometry of a triangle}
Let us recall some classical results from elementary geometry. Let $T(a,b,c)$ denote a triangle with sides $a,b,c$ and angles $\alpha, \beta,\gamma$. By the law of cosines:

	\begin{equation}\label{formula:cos}
	\cos(\alpha)=\frac{b^2+c^2-a^2}{2bc}
	\end{equation}
	\begin{equation}\label{formula:sin}
	\sin(\alpha)=\sqrt{1-\cos(\alpha)^2}=\frac{\sqrt{4b^2 c^2 -(b^2 +c^2 -a^2)^2 }}{2bc}
	\end{equation}

Let $A$ denote the area of $T(a,b,c)$.

	\begin{equation}\label{formula:area}
	A=\frac{bc\sin(\alpha)}{2}=\frac{\sqrt{4b^2 c^2 -(b^2 +c^2 -a^2)^2 }}{4}
	\end{equation}

Using these formulas we will introduce two notions, the characteristic of a triangle and the residual of an angle. Strictly speaking we will only need residuals for the proof of Theorem \ref{thm:mainwheel}, but there is a strong connection to the characteristics of triangles so they are worth mentioning. 

\subsection*{Characteristic of a triangle}

From (\ref{formula:area}) we can see that if $a,b$ and $c$ are integers, then we can write the area of  $T(a,b,c)$ as $r \sqrt{D}$ for some rational number $r$ and a square-free integer $D$. If the area is $0$, then $r=0$ and $D$ can be any square-free integer. If the area is non-zero, then $D$ must be the square-free part of $4b^2 c^2 -(b^2 +c^2 -a^2)^2$. In this case the number $D$ is called the \emph{characteristic} of the triangle.  

We say that a point set in the plane is \emph{integral} if the pairwise distances are integers. The characteristic of triangles is a useful tool in the study and algorithmic generation of integral point sets (see for example \cite{MR2413160}). The following statement is folklore, for a proof see \cite{MR2262623}.    

\begin{theorem}
	The triangles spanned by each three non collinear points in a plane integral point set have the same characteristic. 
\end{theorem}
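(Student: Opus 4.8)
The plan is to reduce the statement to the special case of two triangles that share an edge, and then to dispatch that case by writing everything in coordinates.

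\medskip\noindent\emph{Step 1: two triangles sharing an edge.} Let $A,B,C,D$ be four points of the integral point set and suppose the triangles $ABC$ and $ABD$ are both non-degenerate; I claim they have the same characteristic. By (\ref{formula:area}), $16\cdot\mathrm{Area}(T)^2$ is a non-negative integer for every triangle $T$ with integer side lengths, and (for $\mathrm{Area}(T)\ne 0$) its square-free part is precisely $\chi(T)$; since $16$ is a perfect square this is the same as the square-free part of the rational number $\mathrm{Area}(T)^2$. Hence it is enough to check that $\mathrm{Area}(ABC)^2$ and $\mathrm{Area}(ABD)^2$ differ by the square of a rational. Put $A$ at the origin and $B=(c,0)$ with $c=|AB|\in\mathbb{Z}_{>0}$. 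For any point $P=(x,y)$ of the set, both $|AP|^2=x^2+y^2$ and $|BP|^2=(x-c)^2+y^2$ are integers, so $x=\tfrac{|AP|^2-|BP|^2+c^2}{2c}\in\mathbb{Q}$ and then $y^2=|AP|^2-x^2\in\mathbb{Q}$. Applying this to $C=(x_C,y_C)$ and $D=(x_D,y_D)$ gives $y_C^2,y_D^2\in\mathbb{Q}$, both nonzero by non-degeneracy, and expanding $|CD|^2=(x_C-x_D)^2+(y_C-y_D)^2\in\mathbb{Z}$ shows in addition that $y_Cy_D\in\mathbb{Q}$. Therefore $y_C/y_D=(y_Cy_D)/y_D^2\in\mathbb{Q}$, and since $\mathrm{Area}(ABC)^2=\tfrac14 c^2y_C^2$ and $\mathrm{Area}(ABD)^2=\tfrac14 c^2y_D^2$, the two squared areas differ by the rational square $(y_C/y_D)^2$. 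This proves Step 1. (One can also argue without coordinates, using $2\,\mathrm{Area}(ABC)=|\overrightarrow{AB}\times\overrightarrow{AC}|$ and a Lagrange-type identity to see directly that the product $\mathrm{Area}(ABC)\,\mathrm{Area}(ABD)$ is rational.)

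\medskip\noindent\emph{Step 2: chaining arbitrary triangles.} Call two non-degenerate triangles with vertices in the set \emph{adjacent} if they share an edge; by Step 1, adjacent triangles have equal characteristic, so it suffices to show that any two non-degenerate triangles $T_1,T_2$ are linked by a finite chain of successively adjacent non-degenerate triangles. First suppose $T_1$ and $T_2$ share a vertex $v$. If they share a second vertex they are adjacent (or equal) and we are done; otherwise write $T_1=vXY$, $T_2=vZW$ with $X,Y,Z,W$ four distinct points, none equal to $v$. As $T_2=vZW$ is non-degenerate, $Z$ and $W$ cannot both lie on the line $vX$, say $Z\notin vX$; then $vXZ$ is non-degenerate, adjacent to $T_1$ through the edge $vX$, and adjacent to $T_2$ through the edge $vZ$, giving the chain $T_1,\,vXZ,\,T_2$. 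For arbitrary $T_1,T_2$, pick a vertex $D$ of $T_2$; if $D$ is a vertex of $T_1$ we are in the previous case, and if not then, $T_1$ being non-degenerate, $D$ lies on at most one of the three lines through pairs of vertices of $T_1$ (any two of those lines meet only at a vertex of $T_1$), so some edge $PQ$ of $T_1$ satisfies $D\notin PQ$; then $PQD$ is non-degenerate, adjacent to $T_1$ through $PQ$, and shares the vertex $D$ with $T_2$, reducing us to the shared-vertex case. (If the point set has no three non-collinear points, the statement is vacuous.)

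\medskip The genuinely algebraic content is confined to Step 1 and is short; the only place that needs a little care is the case analysis of Step 2, whose point is precisely to guarantee that every triangle appearing in the chain is non-degenerate, so that its characteristic is defined and Step 1 applies.
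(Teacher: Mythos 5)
Your proof is correct. Note that the paper itself does not prove this theorem --- it is stated as folklore with a citation --- so there is no in-paper argument to compare against; your two-step argument (coordinates to show that two non-degenerate triangles sharing an edge have squared areas whose ratio is a rational square, hence equal square-free parts, followed by a chaining argument that carefully keeps every intermediate triangle non-degenerate) is the standard folklore proof and is complete. The one place that deserved care, making sure the connecting triangles $vXZ$ and $PQD$ are themselves non-degenerate so that their characteristic is defined, is handled correctly.
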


    Consider an embedding of a wheel graph where the edges have integer lengths. The rest of the distances might be non-integer, so the $n$ triangles of the embedding can have different characteristics. (See for example Figure \ref{fig:chars}). When we started the study of embeddings of wheel graphs, we hoped to show that there cannot be too many characteristics appearing in a embedding. It turned out that there can be arbitrarily many, but we can still gain some information by considering them.  Later in this section we will show the following statement.

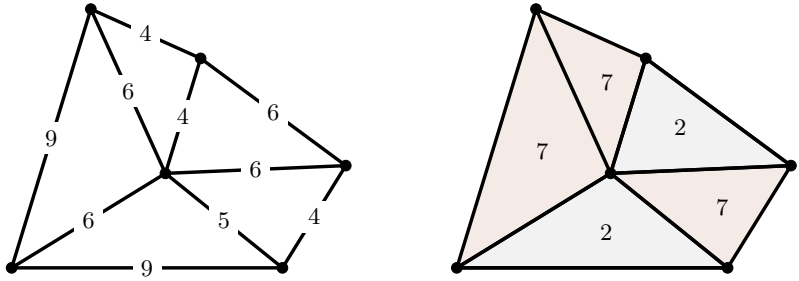
\begin{figure}[ht!]
	\centering
\definecolor{zzttqq}{rgb}{0.6,0.2,0.}
\definecolor{yqyqyq}{rgb}{0.5019607843137255,0.5019607843137255,0.5019607843137255}
\begin{tikzpicture}[line cap=round,line join=round,>=triangle 45,x=0.40cm,y=0.40cm]
\clip(-1.5057950876280606,-1.4925604465535915) rectangle (13.019376431358536,9.769100962850517);
\draw [line width=\lw] (0.,0.)-- (5.111111111111111,3.142696805273545) node [midway,fill=white] {6};
\draw [line width=\lw] (5.111111111111111,3.142696805273545)-- (9.,0.) node [midway,fill=white] {5};
\draw [line width=\lw] (5.111111111111111,3.142696805273545)-- (11.105549368960405,3.4009795434360437) node [midway,fill=white] {6};
\draw [line width=\lw] (11.105549368960405,3.4009795434360437)-- (9.,0.) node [midway,fill=white] {4};
\draw [line width=\lw] (5.111111111111111,3.142696805273545)-- (6.28086763452817,6.967833361780154) node [midway,fill=white] {4};
\draw [line width=\lw] (6.28086763452817,6.967833361780154)-- (11.105549368960405,3.4009795434360437) node [midway,fill=white] {6};
\draw [line width=\lw] (5.111111111111111,3.142696805273545)-- (2.6319521776883823,8.60655725213987) node [midway,fill=white] {6};
\draw [line width=\lw] (2.6319521776883823,8.60655725213987)-- (6.28086763452817,6.967833361780154) node [midway,fill=white] {4};
\draw [line width=\lw] (2.6319521776883823,8.60655725213987)-- (0.,0.) node [midway,fill=white] {9};
\draw [line width=\lw] (0.,0.)-- (9.,0.0001) node [midway,fill=white] {9};
\begin{scriptsize}
\draw [fill=black] (0.,0.) circle (\pw);
\draw [fill=black] (9.,0.) circle (\pw);
\draw [fill=black] (5.111111111111111,3.142696805273545) circle (\pw);
\draw [fill=black] (11.105549368960405,3.4009795434360437) circle (\pw);
\draw [fill=black] (6.28086763452817,6.967833361780154) circle (\pw);
\draw [fill=black] (2.6319521776883823,8.60655725213987) circle (\pw);
\end{scriptsize}
\end{tikzpicture}
\begin{tikzpicture}[line cap=round,line join=round,>=triangle 45,x=0.40cm,y=0.40cm]
\clip(-1.5057950876280606,-1.4925604465535915) rectangle (13.019376431358536,9.769100962850517);
\fill[line width=\lw,color=yqyqyq,fill=yqyqyq,fill opacity=0.10000000149011612] (6.28086763452817,6.967833361780154) -- (5.111111111111111,3.142696805273545) -- (11.105549368960405,3.4009795434360437) -- cycle;
\fill[line width=\lw,color=yqyqyq,fill=yqyqyq,fill opacity=0.10000000149011612] (5.111111111111111,3.142696805273545) -- (0.,0.) -- (9.,0.) -- cycle;
\fill[line width=\lw,color=zzttqq,fill=zzttqq,fill opacity=0.10000000149011612] (2.6319521776883823,8.60655725213987) -- (6.28086763452817,6.967833361780154) -- (5.111111111111111,3.142696805273545) -- cycle;
\fill[line width=\lw,color=zzttqq,fill=zzttqq,fill opacity=0.10000000149011612] (2.6319521776883823,8.60655725213987) -- (0.,0.) -- (5.111111111111111,3.142696805273545) -- cycle;
\fill[line width=\lw,color=zzttqq,fill=zzttqq,fill opacity=0.10000000149011612] (5.111111111111111,3.142696805273545) -- (11.105549368960405,3.4009795434360437) -- (9.,0.) -- cycle;
\draw [line width=\lw] (0.,0.)-- (5.111111111111111,3.142696805273545);
\draw [line width=\lw] (5.111111111111111,3.142696805273545)-- (9.,0.);
\draw [line width=\lw] (5.111111111111111,3.142696805273545)-- (11.105549368960405,3.4009795434360437);
\draw [line width=\lw] (11.105549368960405,3.4009795434360437)-- (9.,0.);
\draw [line width=\lw] (5.111111111111111,3.142696805273545)-- (6.28086763452817,6.967833361780154);
\draw [line width=\lw] (6.28086763452817,6.967833361780154)-- (11.105549368960405,3.4009795434360437);
\draw [line width=\lw] (5.111111111111111,3.142696805273545)-- (2.6319521776883823,8.60655725213987);
\draw [line width=\lw] (2.6319521776883823,8.60655725213987)-- (6.28086763452817,6.967833361780154);
\draw [line width=\lw] (2.6319521776883823,8.60655725213987)-- (0.,0.);
\draw [line width=\lw] (0.,0.)-- (9.,0.);
\draw [line width=\lw] (6.28086763452817,6.967833361780154)-- (5.111111111111111,3.142696805273545);
\draw [line width=\lw] (5.111111111111111,3.142696805273545)-- (11.105549368960405,3.4009795434360437);
\draw [line width=\lw] (11.105549368960405,3.4009795434360437)-- (6.28086763452817,6.967833361780154);
\draw [line width=\lw] (5.111111111111111,3.142696805273545)-- (0.,0.);
\draw [line width=\lw] (0.,0.)-- (9.,0.0001);
\draw [line width=\lw] (9.,0.)-- (5.111111111111111,3.142696805273545);
\draw (6.93346482711151,5.214136174431486) node[anchor=north west] {$2$};
\draw (4.47434273275031,1.7210650176684248) node[anchor=north west] {$2$};

\draw (8.330693289816736,2.5594020952915595) node[anchor=north west] {$7$};
\draw (2.3505554694383655,4.43168823531656) node[anchor=north west] {$7$};
\draw (4.502287302004415,6.7) node[anchor=north west] {$7$};
\begin{scriptsize}
\draw [fill=black] (0.,0.) circle (\pw);
\draw [fill=black] (9.,0.) circle (\pw);
\draw [fill=black] (5.111111111111111,3.142696805273545) circle (\pw);
\draw [fill=black] (11.105549368960405,3.4009795434360437) circle (\pw);
\draw [fill=black] (6.28086763452817,6.967833361780154) circle (\pw);
\draw [fill=black] (2.6319521776883823,8.60655725213987) circle (\pw);
\end{scriptsize}
\end{tikzpicture}

\caption{Characteristics of the triangles in an embedding.}
\label{fig:chars}
\end{figure}

\begin{lemma}\label{thm:multofpi}
	Consider an embedding of a wheel graph with integer edge lengths. Then the angles among the $\theta_{i,i+1}$-s corresponding to the triangles of a given characteristic add up to an integer multiple of $\pi$.
\end{lemma}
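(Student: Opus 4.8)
The plan is to turn the statement into a fact about a multiplicative relation among unit-modulus algebraic numbers. Fix an embedding with integer edge lengths and set $N_i:=r_i^2+r_{i+1}^2-r_{i,i+1}^2\in\mathbb{Z}$. By (\ref{formula:cos}) we have $\cos\theta_{i,i+1}=N_i/(2r_ir_{i+1})\in\mathbb{Q}$, and by (\ref{formula:sin}) and (\ref{formula:area}) we have $\sin\theta_{i,i+1}=\pm\sqrt{Q_i}/(2r_ir_{i+1})$ with $Q_i:=4r_i^2r_{i+1}^2-N_i^2=16A_i^2\in\mathbb{Z}_{\ge 0}$. For a non-degenerate $i$-th triangle of characteristic $D$, the definition of the characteristic says precisely that $Q_i=Dn^2$ for some integer $n$, so $\sqrt{Q_i}=n\sqrt{D}$ and the unit complex number $w_i:=\cos\theta_{i,i+1}+\sqrt{-1}\,\sin\theta_{i,i+1}$ lies in $\mathbb{Q}(\sqrt{-D})$ and has norm $1$ over $\mathbb{Q}$; a degenerate triangle has $\sin\theta_{i,i+1}=0$, so $w_i=\pm1$, which again lies in $\mathbb{Q}(\sqrt{-D})$ for whatever square-free $D$ is assigned to it as its characteristic.

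Viewing the points as complex numbers, $A_{i+1}=A_i\cdot(r_{i+1}/r_i)\,w_i$, and multiplying these identities cyclically (the radii telescope and $A_1$ cancels) gives $\prod_{i=1}^{n}w_i=1$. Let $\mathcal{D}$ be the set of characteristics occurring in the embedding; for $D\in\mathcal{D}$ put $\phi_D:=\sum\{\theta_{i,i+1}:\text{the }i\text{-th triangle has characteristic }D\}$ and $\xi_D:=\prod\{w_i:\text{the }i\text{-th triangle has characteristic }D\}$, so that $\xi_D=\cos\phi_D+\sqrt{-1}\,\sin\phi_D\in\mathbb{Q}(\sqrt{-D})$ has norm $1$ and $\prod_{D\in\mathcal{D}}\xi_D=1$. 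Since $\phi_D$ being an integer multiple of $\pi$ is equivalent to $\xi_D=\pm1$, the lemma follows from the purely algebraic statement: \emph{if $D_1,\dots,D_m$ are distinct positive square-free integers and $\xi_j\in\mathbb{Q}(\sqrt{-D_j})$ has absolute value $1$ with $\prod_j\xi_j\in\mathbb{Q}$, then each $\xi_j=\pm1$.}

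To prove this, work inside the multiquadratic field $K:=\mathbb{Q}(\sqrt{-D_1},\dots,\sqrt{-D_m})$ with $G:=\mathrm{Gal}(K/\mathbb{Q})$, an elementary abelian $2$-group. For $\sigma\in G$ we have $\sigma(\xi_j)=\xi_j$ if $\sigma$ fixes $\sqrt{-D_j}$, and $\sigma(\xi_j)=\overline{\xi_j}=\xi_j^{-1}$ if $\sigma(\sqrt{-D_j})=-\sqrt{-D_j}$ (using $|\xi_j|=1$). Applying $\sigma$ to the relation $\prod_j\xi_j\in\mathbb{Q}$ and dividing by the relation itself yields $\prod_{j\in \mathrm{supp}(\sigma)}\xi_j^{2}=1$, where $\mathrm{supp}(\sigma):=\{j:\sigma(\sqrt{-D_j})=-\sqrt{-D_j}\}$. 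If the classes $-D_1,\dots,-D_m$ are linearly independent in $\mathbb{Q}^{\times}/(\mathbb{Q}^{\times})^2$, then for each $j$ there is a $\sigma$ with $\mathrm{supp}(\sigma)=\{j\}$, giving $\xi_j^2=1$ and hence $\xi_j=\pm1$. (Note that any dependence $\prod_{j\in J}(-D_j)\equiv1$ modulo squares must have $|J|$ even, by comparing signs since all $D_j$ are positive; this is used below.)

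The remaining, entangled case --- where some $-D_{j_0}$ equals a product of an odd number of the other $-D_j$ modulo squares, so no isolating automorphism exists --- is the step I expect to be the main obstacle. The plan is induction on $m$; for $m\le3$ the classes $-D_j$ are automatically independent, so the previous paragraph finishes the argument. Otherwise, pick a minimal dependence $J$; then $|J|$ is even and at least $4$ (two distinct positive square-free integers never have a square product). The even subsets $T\subseteq J$ meeting every dependence in an even set are exactly the achievable supports $\mathrm{supp}(\sigma)$ contained in $J$, and each contributes a relation $\prod_{j\in T}\xi_j^{2}=1$; when $J$ is the \emph{only} dependence these include all pairs $T=\{a,b\}\subseteq J$, which forces all $\xi_j^2$ for $j\in J$ to coincide and to have square $1$, hence to equal $\pm1$ --- and the value $-1$ is impossible, since it would require $D_j=1$ for every $j\in J$, contradicting distinctness and $|J|\ge4$. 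Having shown $\xi_j=\pm1$ for $j\in J$, one deletes these indices and applies the inductive hypothesis to the rest, whose $\xi_j$ still multiply to a rational number. The genuinely delicate point, and the one I would spend most effort on, is to make this peeling work uniformly when several minimal dependences overlap, so that neither $\mathrm{supp}(\sigma)=J$ nor all pair-supports inside $J$ are automatically achievable.
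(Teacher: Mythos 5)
Your reduction is correct and elegant: passing to the unit complex numbers $w_i=e^{\sqrt{-1}\,\theta_{i,i+1}}\in\mathbb{Q}(\sqrt{-D})$, telescoping to get $\prod w_i=1$, and grouping by characteristic does faithfully translate the lemma into the algebraic claim you state. This is a genuinely different route from the paper, which instead deduces the lemma from the Conway--Radin--Sadun Splitting Theorem together with Niven's theorem (black-boxing the number-theoretic content into those two cited results). Your independent case, the parity of dependences, and the single-dependence case (all pairs inside $J$ are achievable supports, forcing $\xi_j^4=1$ and then excluding $\xi_j^2=-1$ via $D_j=1$) are all correct.

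However, the gap you flag at the end is real and is not a technicality. When the relation space $R$ of the classes $[-D_j]$ in $\mathbb{Q}^{\times}/(\mathbb{Q}^{\times})^{2}$ has dimension at least $2$, the relations $\prod_{j\in S}\xi_j^{2}=1$ for achievable supports $S\in R^{\perp}$ genuinely fail to pin down the $\xi_j^{2}$ on formal grounds. Concretely, for $D\in\{1,6,2,3,10,15\}$ (indices $1,\dots,6$) the dependences are spanned by $\{1,2,3,4\}$ and $\{1,2,5,6\}$; the pairs in $R^{\perp}$ are only $\{1,2\},\{3,4\},\{5,6\}$, plus triples such as $\{1,3,5\}$, and the resulting system $\xi_1^2\xi_2^2=\xi_3^2\xi_4^2=\xi_5^2\xi_6^2=\xi_1^2\xi_3^2\xi_5^2=1$ admits solutions with $\xi_1^2$ an arbitrary unit-modulus number. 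To rule these out you must additionally use \emph{where} each $\xi_j$ lives, e.g.\ that $\xi_5^{-2}=\xi_1^2\xi_3^2\in\mathbb{Q}(\sqrt{-D_1},\sqrt{-D_3})\cap\mathbb{Q}(\sqrt{-D_5})=\mathbb{Q}$ because $[-D_5]$ is not in the span of $[-D_1],[-D_3]$ --- an argument of a different nature from the Galois-orbit relations, and one that must then be organized into a terminating peeling procedure for an arbitrary configuration of overlapping dependences. Until that induction is carried out in full generality (or the claim is instead deduced from the Splitting Theorem, as the paper does), the proof is incomplete precisely in the case your argument is supposed to handle beyond $m\le 3$.
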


\subsection*{Residual of an angle}

Considering (\ref{formula:cos}) and (\ref{formula:area}) we can see that the characteristic of a triangle transfers to the angles in the following sense. If a triangle that have integer sides have characteristic $D$, then the sine of its angles have the form $ q\sqrt{D}$ for some rational number $q$, and the cosines of the angles are rational.  Hence we will say that an \emph{angle $\theta$ has residual $D$} if $D$ is square-free, $\sin(\theta)=q\sqrt{D}$ for some rational number $q$, and furthermore $\cos(\theta)$ is rational. 

Most angles in general do not have any residual. Integer multiples of $\pi$ have residual $D$ for any square-free integer $D$, but other angles have at most one residual. If the residual is unique, it will be called \emph{the residual of the angle}. For example the residual of $\frac{\pi}{2}$ is 1, the residual of $\frac{\pi}{3}$ is 3, but $\frac{\pi}{6}$ does not have any residual.  Just as the characteristic of triangles, the residual of the angles is a useful tool, in \cite{MR3229123} it was used to find trisectible angles in triangles that have integer sides.

The trigonometric addition formulas $\sin(\theta+\phi)=\sin(\theta)\cos(\phi)+\sin(\phi)\cos(\theta)$ and $\cos(\theta+\phi)=\cos(\theta)\cos(\phi)-\sin(\theta)\sin(\phi)$ immediately imply that the set of angles that have residual $D$ are closed under addition. Also, for any $\phi$ the following angles have the same set of residuals: $\phi, -\phi, \pi+\phi, \pi-\phi$.

\subsection*{Angles whose squared trigonometric functions are rational}

Conway, Radin and Sadun \cite{MR1706614} studied angles whose squared trigonometric functions are rational. They said that $\theta$ is a \emph{pure geodetic angle} if the square of its sine is rational and they showed the following theorem.

\begin{theorem}[The Splitting Theorem \cite{MR1706614}]\label{thm:splitting}
	If the value of a rational linear combination of pure geodetic angles is a rational multiple of $\pi$, then so is the value of its
	restriction to those angles whose tangents are rational multiples of any given square root.
\end{theorem}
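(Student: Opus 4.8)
The plan is to convert the additive statement about angles into a multiplicative statement about algebraic numbers on the unit circle, and then to separate the contributions of the different fields using the linear independence of quadratic characters of a Galois group. First I would clear denominators: if $\sum_i q_i\theta_i=r\pi$ with $q_i,r\in\mathbb{Q}$, multiply through by a common denominator $N$ to obtain integers $n_i=Nq_i$ with $\sum_i n_i\theta_i\in\pi\mathbb{Z}$, and pass to $z_i:=e^{2i\theta_i}$, which turns the hypothesis into $\prod_i z_i^{n_i}=1$. For a pure geodetic angle, $\cos 2\theta=1-2\sin^2\theta$ is rational and $(\sin 2\theta)^2=4\sin^2\theta(1-\sin^2\theta)$ is rational, so $z=\cos 2\theta+i\sin 2\theta$ lies in an imaginary quadratic field $\mathbb{Q}(\sqrt{-D})$; since $\tan\theta=\sin 2\theta/(1+\cos 2\theta)$, this $D$ is exactly the square-free integer with $\tan\theta\in\mathbb{Q}\sqrt{D}$. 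Grouping the indices by their square-free $D$ and setting $w_D=\prod_{i\in S_D}z_i^{n_i}$, the hypothesis reads $\prod_D w_D=1$, each $w_D$ is a modulus-one element of $K_D:=\mathbb{Q}(\sqrt{-D})$ and hence of norm one there, and the desired conclusion is precisely that each $w_D$ is a root of unity.

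Next I would embed the finitely many fields $K_D$ into their compositum $L$, a multiquadratic field, and set $G=\mathrm{Gal}(L/\mathbb{Q})$. Each $\tau\in G$ acts on $K_D$ either trivially or by complex conjugation, which is recorded by the quadratic character $\chi_D$ cutting out $K_D$, with values $\chi_D(\tau)\in\{+1,-1\}$. Because $w_D$ has norm one we have $\overline{w_D}=w_D^{-1}$, so $\tau(w_D)=w_D^{\chi_D(\tau)}$. Fixing any prime $\mathfrak{P}$ of $L$, applying $\tau$ to $\prod_D w_D=1$, and then applying the valuation $v_{\mathfrak{P}}$ (using that $\tau$ permutes roots of unity, so the right-hand side still has valuation $0$), I obtain $\sum_D \chi_D(\tau)\,x_D=0$, where $x_D:=v_{\mathfrak{P}}(w_D)$, and this identity holds for every $\tau\in G$.

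The separation step is the crux. Distinct square-free integers $D$ determine distinct quadratic fields and hence distinct, and therefore linearly independent, characters $\chi_D$ of $G$; consequently the vanishing of the function $\sum_D x_D\chi_D$ on $G$ forces every $x_D=0$. I expect this to be the main conceptual obstacle, and the reason the theorem has genuine content: the square roots $\sqrt{-D}$ themselves can be multiplicatively dependent (for instance $\sqrt{-6}\in\mathbb{Q}(\sqrt{-1},\sqrt{-2},\sqrt{-3})$), so one cannot in general isolate a single field by choosing a rational prime that splits there and nowhere else, and a naive ``pick a separating prime'' strategy fails exactly in these cases. The point is that, despite such dependencies, the associated characters $\chi_D$ remain \emph{distinct}, and it is their linear independence over $\mathbb{C}$ (Dedekind's independence of characters), rather than any splitting condition on primes, that cleanly decouples the fields.

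Finally, since $\mathfrak{P}$ was an arbitrary prime of $L$ and the valuation of $w_D$ at a prime of $K_D$ is a positive multiple of its valuation at any prime of $L$ above it, all valuations of $w_D$ in $K_D$ vanish. Hence $w_D$ generates the unit ideal and is a unit of the ring of integers of $K_D$; as $K_D$ is imaginary quadratic, its unit group is finite, so $w_D$ is a root of unity. Unwinding the substitution, $\sum_{i\in S_D}q_i\theta_i=\tfrac{1}{N}\sum_{i\in S_D}n_i\theta_i$ is a rational multiple of $\pi$, which is exactly the asserted restriction property. (The degenerate angles with $\sin 2\theta=0$, namely multiples of $\tfrac{\pi}{2}$, give $z=\pm1$, are already rational multiples of $\pi$, and may be discarded at the outset.)
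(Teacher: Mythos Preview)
The paper does not actually prove Theorem~\ref{thm:splitting}: it is quoted from Conway, Radin and Sadun \cite{MR1706614} and used as a black box, so there is no in-paper proof to compare against.

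That said, your argument is correct. The passage from the additive relation to $\prod_D w_D=1$ with each $w_D$ a norm-one element of $K_D=\mathbb{Q}(\sqrt{-D})$ is sound, and the decisive step---observing that every $\tau\in\mathrm{Gal}(L/\mathbb{Q})$ acts by $\tau(w_D)=w_D^{\chi_D(\tau)}$ and then invoking the linear independence of the distinct quadratic characters $\chi_D$ to force $v_{\mathfrak P}(w_D)=0$ at every prime---is clean. Finiteness of the unit group in an imaginary quadratic field then makes each $w_D$ a root of unity, as required. (One minor point: the parenthetical about $\tau$ permuting roots of unity is unnecessary, since the right-hand side of the identity you apply $\tau$ to is literally $1$.) Your route differs in flavour from the original Conway--Radin--Sadun proof, which proceeds by a direct analysis of prime factorisations of the numbers $e^{2i\theta}$ in the individual quadratic fields rather than via characters of the multiquadratic compositum; the endpoint is the same, but your formulation makes especially transparent why multiplicative dependencies among the $\sqrt{-D}$ (such as your $\sqrt{-6}$ example) do not obstruct the separation, since distinct fields still yield distinct characters.
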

 
 Clearly, angles that have residual $D$ are pure geodetic angles and have tangents that are rational multiples of $\sqrt{D}$. Therefore, Theorem \ref{thm:splitting} applies to them, but we can even strengthen it in some sense. Note that in the next theorem we consider simple sums instead of rational linear combinations.

\begin{theorem}[The Splitting Theorem for angles that have residual]\label{thm:splitwheel}
	Let us consider some angles that have a residual. If the value of the sum of these angles is a rational multiple of $\pi$, then so is the value of its restriction to those angles that have a given residual. Furthermore, these restricted sums must add up to integer multiples of $\frac{\pi}{3}$ or $\frac{\pi}{2}$.
\end{theorem}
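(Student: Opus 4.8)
The plan is to derive the splitting itself as an immediate consequence of the Splitting Theorem of Conway, Radin and Sadun (Theorem~\ref{thm:splitting}), and then to extract the stronger conclusion about $\tfrac{\pi}{3}$ and $\tfrac{\pi}{2}$ from Niven's theorem. First I would record that an angle $\theta$ with residual $D$ is a pure geodetic angle, since $\sin^2\theta = q^2 D$ is rational, and that as long as $\cos\theta\neq 0$ its tangent is $(q/\cos\theta)\sqrt{D}$, i.e.\ a rational multiple of $\sqrt{D}$; conversely, a given angle with a residual whose tangent lies in $\mathbb{Q}\sqrt{D}$ must have residual $D$, unless it is a multiple of $\pi$ (in which case its tangent is $0$ and it has every residual). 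The only angles with a residual and $\cos\theta=0$ are the odd multiples of $\tfrac{\pi}{2}$, which are themselves rational multiples of $\pi$ with residual $1$; I would move these summands to the right-hand side at the outset, so that the remaining summands still sum to a rational multiple of $\pi$ and all now have finite tangent. Theorem~\ref{thm:splitting} then applies, and collecting the surviving summands according to the square root appearing in their tangents — declaring, by convention, that any summand which is a multiple of $\pi$ belongs to the class $D=1$, and returning the removed right angles to the same class — shows that for every square-free $D$ the restricted sum $S_D$ of the angles of residual $D$ is a rational multiple of $\pi$. This is the first assertion.

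For the ``furthermore'' part, fix a square-free $D$ and consider $S_D$. Every summand of $S_D$ has residual $D$, and, as already observed in this section via the trigonometric addition formulas, the set of angles of residual $D$ is closed under addition; hence $S_D$ itself has residual $D$, and in particular $\cos S_D$ is rational. Combined with the first part, $S_D$ is a rational multiple of $\pi$ whose cosine is rational, so $2\cos S_D$ is both rational and an algebraic integer (the standard fact that $2\cos$ of a rational multiple of $\pi$ is an algebraic integer), hence an ordinary integer; since $|2\cos S_D|\le 2$ we get $\cos S_D\in\{0,\pm\tfrac12,\pm1\}$. Reducing modulo $2\pi$ — which preserves ``integer multiple of $\tfrac{\pi}{3}$'' and ``integer multiple of $\tfrac{\pi}{2}$'' because $2\pi$ is an integer multiple of each — the rational multiples of $\pi$ with cosine in $\{0,\pm\tfrac12,\pm1\}$ are exactly the integer multiples of $\tfrac{\pi}{3}$ (cosine $\pm1$ or $\pm\tfrac12$) together with the odd multiples of $\tfrac{\pi}{2}$ (cosine $0$). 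In every case $S_D$ is an integer multiple of $\tfrac{\pi}{3}$ or of $\tfrac{\pi}{2}$, as claimed.

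I expect the only real obstacle to be the correct invocation of Theorem~\ref{thm:splitting}: essentially all of the arithmetic content is packaged there, and the present theorem is mostly a repackaging of it together with the elementary Niven classification. The genuinely delicate points are the small bookkeeping ones indicated above — handling summands with $\cos\theta=0$ (infinite tangent) and summands that are multiples of $\pi$ (ambiguous residual) so that each is assigned consistently to a single residual class, and checking that ``integer multiple of $\tfrac{\pi}{3}$ or $\tfrac{\pi}{2}$'' survives the reduction modulo $2\pi$. None of these is hard, but they are the places where a careless argument could go wrong.
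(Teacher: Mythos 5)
Your proof is correct and follows essentially the same route as the paper: the splitting itself is deduced from Theorem~\ref{thm:splitting}, and the restricted sums are pinned down by combining closure of a residual class under addition (hence rational cosine) with the classification of rational multiples of $\pi$ with rational cosine. The only differences are cosmetic — you prove that classification via the algebraic-integer argument where the paper simply cites Niven's theorem (Theorem~\ref{thm:niven}), and you spell out the bookkeeping for right angles and multiples of $\pi$ that the paper leaves implicit.
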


\begin{proof}
The first part is clear from Theorem \ref{thm:splitting}. For the second part we recall Niven's Theorem.

\begin{theorem}[Niven's theorem \cite{MR0080123}]\label{thm:niven}
	Consider the angles in range $0\le \theta \le \frac{\pi}{2}$. The only values of $\theta$ such that both $\frac{\theta}{\pi}$ and $\cos(\theta)$ are rational are $0,\frac{\pi}{3}$  and $\frac{\pi}{2}$.
\end{theorem}

Since angles corresponding to a given residual are closed under addition, the sum restricted to residual $D$ gives us an angle that has residual $D$. But angles that have residual $D$ have rational cosine so we can apply Theorem \ref{thm:niven} to the restricted sums.  
\end{proof}

Now we are ready to prove Lemma \ref{thm:multofpi}. Since triangles that have characteristic $D$ have angles that have residual $D$, it is enough to show the following residual version. 

\begin{lemma}\label{lemma:multofpiresi}
	Consider an embedding of a wheel graph with integer edge lengths. Then the angles among the $\theta_{i,i+1}$-s corresponding to a given residual add up to an integer multiple of $\pi$.
\end{lemma}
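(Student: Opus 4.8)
The plan is to work in the complex plane. Identify $\mathbb{R}^2$ with $\mathbb{C}$, so that $O=0$ and each $A_i$ is a nonzero complex number (nonzero since $r_i=|OA_i|$ is a positive integer); let $\mathrm i$ denote the imaginary unit. For each $i$ the unit complex number $u_i:=A_{i+1}\overline{A_i}/(r_ir_{i+1})$ equals $e^{\mathrm i\,\theta_{i,i+1}}$, i.e.\ it encodes the directed angle $\theta_{i,i+1}=\angle A_i O A_{i+1}$. The first step is the identity $\prod_{i=1}^{n}u_i=1$, which follows by a telescoping computation: writing $P:=\prod_{j=1}^{n}A_j$, a cyclic reindexing gives $\prod_{i=1}^{n}A_{i+1}\overline{A_i}=P\overline{P}=|P|^2$ for the numerator and $\prod_{i=1}^{n}r_ir_{i+1}=\prod_j r_j^2=|P|^2$ for the denominator. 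In particular $\sum_{i=1}^{n}\theta_{i,i+1}$ is an integer multiple of $2\pi$, hence a rational multiple of $\pi$.

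Next I would sort the triangles $OA_iA_{i+1}$ of the embedding by characteristic. All three side lengths $r_i$, $r_{i+1}$, $r_{i,i+1}$ are edges of the wheel, hence integers, so each non-degenerate triangle $OA_iA_{i+1}$ has a characteristic $D_i$, and then $\theta_{i,i+1}$ has residual $D_i$; a degenerate triangle has $\theta_{i,i+1}\in\{0,\pi\}$, which has every residual, so we may simply declare it to have residual $1$. This partitions $\{1,\dots,n\}$ into sets $I_D$ indexed by square-free positive integers $D$. Put $S_D:=\sum_{i\in I_D}\theta_{i,i+1}$ and $z_D:=\prod_{i\in I_D}u_i=e^{\mathrm i\,S_D}$, so that $\prod_D z_D=1$. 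Since $\sum_i\theta_{i,i+1}$ is a rational multiple of $\pi$ and every $\theta_{i,i+1}$ has a residual, Theorem~\ref{thm:splitwheel} applies: each $S_D$ is an integer multiple of $\frac{\pi}{3}$ or of $\frac{\pi}{2}$, so each $z_D$ is a root of unity of order dividing $12$.

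The heart of the argument is to improve ``multiple of $\frac{\pi}{3}$ or $\frac{\pi}{2}$'' to ``multiple of $\pi$'', i.e.\ to show $z_D=\pm1$ for every $D$. Because the angles of residual $D$ are closed under addition, $S_D$ itself has residual $D$; thus $\cos S_D\in\mathbb{Q}$ and $\sin S_D=q\sqrt{D}$ for some $q\in\mathbb{Q}$, whence $z_D=\cos S_D+q\sqrt{-D}\in\mathbb{Q}(\sqrt{-D})$. A root of unity lying in the imaginary quadratic field $\mathbb{Q}(\sqrt{-D})$ has order $1,2,3,4$ or $6$, and orders $3,6$ force $\mathbb{Q}(\sqrt{-D})=\mathbb{Q}(\sqrt{-3})$, i.e.\ $D=3$, while order $4$ forces $\mathbb{Q}(\sqrt{-D})=\mathbb{Q}(\sqrt{-1})$, i.e.\ $D=1$. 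Hence $z_D=\pm1$, and so $S_D\in\pi\mathbb{Z}$, for every $D\notin\{1,3\}$. Finally I would feed this back into $\prod_D z_D=1$, which now reads $z_1z_3=\pm1$, so $z_3=\pm z_1^{-1}$; if $z_1$ had order $4$ then $z_3$ would too, contradicting that its order divides $6$. Therefore $z_1=\pm1$, and then $z_3=\pm1$ as well, which is exactly the assertion $S_1,S_3\in\pi\mathbb{Z}$ and completes the proof.

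I expect the last step to be the real obstacle. Taken in isolation, the Splitting Theorem genuinely permits $S_3$ to be an odd multiple of $\frac{\pi}{3}$ or $S_1$ an odd multiple of $\frac{\pi}{2}$; excluding these possibilities is where one must use the global fact that the spokes sweep all the way around $O$, packaged in $\prod_i u_i=1$. Everything else is routine: that every triangle of the embedding has integer sides (hence a characteristic), the harmless treatment of degenerate triangles, and the elementary classification of roots of unity in imaginary quadratic fields.
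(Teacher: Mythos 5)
Your proof is correct and is essentially the paper's own argument: you establish that $\sum_i\theta_{i,i+1}\in 2\pi\mathbb{Z}$, invoke Theorem~\ref{thm:splitwheel} to get that each residual-restricted sum is a multiple of $\frac{\pi}{3}$ or $\frac{\pi}{2}$, use that the restricted sum itself has residual $D$ to force $D\in\{1,3\}$ for any non-$\pi$-multiple, and finish by noting the $D=1$ and $D=3$ contributions cannot combine to a multiple of $\pi$ unless each already is one. The only difference is cosmetic packaging (roots of unity in $\mathbb{Q}(\sqrt{-D})$ in place of the paper's direct computation with $\sin(\frac{\pi}{3})=\frac{\sqrt{3}}{2}$ and $\sin(\frac{\pi}{2})=1$).
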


\begin{proof}
	We know that $\sum\limits^n_{i=1}\theta_{i,i+1}$ is an integer multiple of $2\pi$. Hence we can apply Theorem \ref{thm:splitwheel} for the angles $\theta_{i,i+1}$.  Suppose that the angles corresponding to a given residual $D$ add up to $\theta$. From Theorem \ref{thm:splitwheel} it is clear that $\theta$ is either an integer multiple of $\frac{\pi}{2}$ or an integer multiple of $\frac{\pi}{3}$. Let $\theta'=\theta \mod \pi$. Then  $\theta'=0,\frac{\pi}{3},\frac{2\pi}{3}$ or $\frac{\pi}{2}$.  Note that since $\theta$ is the sum of some angles that have residual $D$, it also has residual $D$. Hence $\theta'$ also has residual $D$. 
	
	Since $\sin(\frac{\pi}{3})=\sin(\frac{2\pi}{3})=\frac{\sqrt{3}}{2}$, we have $D=3$ for $\theta'=\frac{\pi}{3}$ and also for $\theta'=\frac{2\pi}{3}$. Similarly we have $D=1$ for $\theta'=\frac{\pi}{2}$.  	Therefore, if we group the terms of $\sum\limits^n_{i=1}\theta_{i,i+1}$ based on the residuals, every group will sum up to an integer multiple of $\pi$ except maybe the ones corresponding to $D=1$ and $D=3$. (Some $\theta_{i,i+1}$ might not have a unique residual but those are themselves integer multiples of $\pi$, we can just pick an arbitrary residual for them).  Since the whole sum should be an integer multiple of $\pi$, the exceptional cases together must add up to a integer multiple of $\pi$. This can only happen if both of them add up to an integer multiple of $\pi$, since $\frac{\pi}{3}+\frac{\pi}{2}$ and $\frac{2\pi}{3}+\frac{\pi}{2}$ are not integer multiples of $\pi$.  Hence, every sum corresponding to a given residual is a integer multiple of $\pi$.
\end{proof}

\section{Wheels with odd edge lengths}\label{section:mainwheel}

In the previous section we considered wheels with arbitrary integer edge lengths. Now we are ready to turn our attention to drawings of odd wheels where the edge lengths are odd numbers.

\begin{lemma}\label{lemma:3mod8}
	If $a,b$ and $c$ are odd numbers and the characteristic of the triangle $T(a,b,c)$ is $D$, then $D\equiv3 \mod 8$.
\end{lemma}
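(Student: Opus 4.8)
The plan is to work entirely modulo $8$. Write $N:=4b^2c^2-(b^2+c^2-a^2)^2$, so that by (\ref{formula:area}) the area of $T(a,b,c)$ equals $\sqrt{N}/4$ and, by the definition of the characteristic, $D$ is the square-free part of $N$; that is, $N=Dm^2$ for some positive integer $m$. The goal is thus to pin down $N$, and then $D$, modulo $8$.

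First I would record the elementary fact that the square of an odd integer is $\equiv 1\pmod 8$. Since $a,b,c$ are odd, this gives $a^2\equiv b^2\equiv c^2\equiv 1\pmod 8$. Consequently $b^2c^2$ is odd, so $4b^2c^2\equiv 4\pmod 8$; and $b^2+c^2-a^2\equiv 1\pmod 8$ is odd, hence $(b^2+c^2-a^2)^2\equiv 1\pmod 8$. Subtracting, $N\equiv 4-1\equiv 3\pmod 8$. In particular $N$ is odd, so in the factorization $N=Dm^2$ both $D$ and $m$ are odd; therefore $m^2\equiv 1\pmod 8$, and $D\equiv Dm^2=N\equiv 3\pmod 8$, as claimed.

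There is essentially no hard step here — the whole argument is a congruence computation. The only point that needs a moment's care is the passage from the congruence for $N$ to the congruence for $D$: this is legitimate only because the ``extra'' factor $m^2$ is a unit modulo $8$, which holds precisely because $N$ is odd, a fact that falls out of the computation $N\equiv 3\pmod 8$ itself. (If one prefers, the oddness of $N$ can also be seen directly from Heron's factorization $N=(a+b+c)(-a+b+c)(a-b+c)(a+b-c)$, a product of four odd numbers.)
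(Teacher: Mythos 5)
Your proof is correct and follows essentially the same route as the paper: compute $4b^2c^2-(b^2+c^2-a^2)^2\equiv 3\pmod 8$ using that odd squares are $1$ bmod $8$, then divide out the square part, which is the square of an odd number and hence $\equiv 1\pmod 8$. Your explicit justification that the cofactor $m$ is odd (because $N$ is odd) is a point the paper states more tersely, but the argument is identical.
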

\begin{proof}
	From  (\ref{formula:area}) we know that the characteristic of the triangle is the square-free part of $4a^2 b^2 -(a^2 +b^2 -c^2)^2$. Since squares of odd numbers are congruent to 1 modulo 8, we have $4a^2 b^2 -(a^2 +b^2 -c^2)^2\equiv3\mod 8$. Since the square part of $4a^2 b^2 -(a^2 +b^2 -c^2)^2$ is the square of an odd number it is congruent to $1$ modulo $8$. Hence $D\equiv 3 \mod 8$.
\end{proof}

This means that if we have an embedding of a wheel graph where the edge lengths are odd integers, then each $\theta_{i,i+1}$ have a unique residual that is congruent to 3 modulo 8. The next idea is to classify the angles whose residual is congruent to 3 modulo 8.

\begin{lemma}\label{lemma:froms}
	Suppose $D\equiv 3 \mod 8$ and $\phi$ is an angle that has residual $D$. Then $\cos(\phi)$ can be written as $\frac{m}{2p}$, where $p\equiv 1 \mod 8$ and $m$ is an integer. Furthermore the remainder of $m$ modulo 8 is determined by the angle, and it is either $1,2,3,5,6$ or $7$. 
\end{lemma}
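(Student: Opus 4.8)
The plan is to extract $\cos\phi$ and $\sin\phi$ from the definition of residual and then do $2$-adic arithmetic. Since $\phi$ has a residual, $\cos\phi$ is rational, so I would write $\cos\phi=\frac{x}{y}$ in lowest terms with $y>0$, and write $\sin\phi=\frac{e}{f}\sqrt{D}$ with $\gcd(e,f)=1$. Squaring and using $\sin^2\phi=1-\cos^2\phi$ gives $e^2Dy^2=f^2(y^2-x^2)$. As $\gcd(x,y)=1$ forces $\gcd(y^2-x^2,y^2)=1$, one reads off $y^2\mid f^2$; as $\gcd(e,f)=1$ one reads off $f^2\mid Dy^2$; and since $D$ is square-free these two divisibilities force $f=y$. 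Hence
\[
x^2+De^2=y^2,\qquad \gcd(x,y)=\gcd(e,y)=1 .
\]

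The core of the argument is then a parity analysis modulo $8$, using only that $D\equiv 3\pmod 8$ (by Lemma~\ref{lemma:3mod8} this is the only case of interest) and that squares are $\equiv 0,1,4\pmod 8$. If $y$ is even, then $x$ and $e$ are both odd, so $x^2+De^2\equiv 1+3\equiv 4\pmod 8$, which forces $y^2\equiv 4\pmod 8$ and hence $y\equiv 2\pmod 4$. If $y$ is odd, then $y^2\equiv 1\pmod 8$, and running through the nine possibilities for $(x^2\bmod 8,\,e^2\bmod 8)$ leaves only $x$ odd and $4\mid e$. So, in lowest terms, either the denominator is $2p_0$ with $p_0$ odd, or the denominator $y$ is odd.

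Finally I would obtain the claimed form by rescaling and check well-definedness. Any expression $\cos\phi=\frac{m}{2p}$ arises from $\frac{x}{y}$ by multiplying numerator and denominator by some positive integer $t$, and imposing $p\equiv 1\pmod 8$ (in particular $p$ odd) pins $t$ down to a single residue class modulo $8$; consequently $m\bmod 8$ does not depend on the chosen expression, which is the ``determined by the angle'' assertion. In the first case ($y=2p_0$) one needs $t$ odd, so $m=xt$ is odd and lies in $\{1,3,5,7\}$; in the second case ($y$ odd) one needs $t=2t'$, so $m=2xt'\equiv 2xy^{-1}\pmod 8$ with $x,y$ odd, hence $m\in\{2,6\}$. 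In all cases $m\not\equiv 0,4\pmod 8$, as required. The only delicate points are the modulo-$8$ case analysis and verifying that the rescaling genuinely makes $m\bmod 8$ independent of the representation; the rest is routine.
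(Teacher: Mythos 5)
Your proof is correct and follows essentially the same route as the paper's: write $\cos\phi$ in lowest terms, use $\sin^2\phi+\cos^2\phi=1$ together with $D\equiv 3\pmod 8$ and the fact that squares are $0,1,4\pmod 8$ to show the denominator is either odd or twice an odd number, then rescale by an inverse modulo $8$ to reach the form $\frac{m}{2p}$ with $p\equiv 1\pmod 8$, with the well-definedness check matching the paper's cross-multiplication argument. The only difference is presentational: you first derive the integral relation $x^2+De^2=y^2$ explicitly and run the modulo-$8$ case analysis in both parity cases, whereas the paper manipulates $\sqrt{b^2-a^2}/b$ directly and needs no modular analysis when numerator and denominator are both odd.
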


We will call this remainder the \emph{class} of $\phi$. 

\begin{proof}
	By the definition of having a residual $\cos(\phi)$ is rational. Since $D\equiv 3 \mod 8$ the value of $\cos(\phi)$ is non-zero. Hence, we can write $\cos(\phi)=\frac{a}{b}$ for some non-zero integers $a,b$ such that $gcd(a,b)=1$. There are two cases.
	
	First, suppose that $a$ and $b$ are odd. Odd numbers have an inverse in $\mathbb{Z}_8$. So, if $b$ is odd, there is an odd number $k$ such that $bk\equiv 1 \mod 8$. Hence we can write $\cos(\phi)=\frac{2ak}{2bk}$. Now $ak$ is odd, therefore $2ak$ is not divisible by 4. Hence $m=2ak,$ $p=bk$  works.

	Second, suppose that $a$ or $b$ is even. Since $gcd(a,b)=1$, one of them is even and the other one is odd. Consider that $\sin(\phi)=\pm\sqrt{1-\frac{a^2}{b^2}}=\pm\frac{\sqrt{b^2-a^2}}{b}$. The square-free part of $b^2-a^2$ is $D$, and $b^2-a^2$ is odd, so $b^2-a^2\equiv 3 \mod 8$. Since the only quadratic residuals modulo $8$ are $0$, $1$ and $4$, the only possibility is that $b^2\equiv 4 \mod 8$ and $a^2\equiv 1 \mod 8$. Since $b^2\equiv 4 \mod 8$,  $b'=\frac{b}{2}$ is odd and similarly to the previous case there is an odd $k$ such that $kb'\equiv 1\mod 8$. Since $a^2\equiv 1 \mod 8$, $a$ must be odd. So $m=ak$, $p=b'k$ works.

	It is also easy to see that an angle cannot fall into two classes, notice that $\frac{m_1}{2p_1}=\frac{m_2}{2p_2}$ implies $m_1p_2\equiv m_2p_1\mod 8$. 
\end{proof}

The aim of the next lemma is to answer the following question. Suppose we have two angles one of class $m_1$ and one of class $m_2$. If their sum have a class, what could that be?  

\begin{lemma}\label{lemma:classadition}
	If $\cos(\theta)=\frac{m_{1}}{2p_{1}}$, $\cos(\phi)=\frac{m_{2}}{2p_{2}}$ and $\cos(\theta+\phi)=\frac{m_{3}}{2p_{2}}$ for some integers $p_{1},p_{2},p_{3}$ that are congruent to 1 modulo 8 and integers $m_{1},m_{2},m_{3}$, then  
	
\begin{equation}\label{formula:cosadd}
m_{1}^2+m_{2}^2+m_{3}^2-m_{1}m_{2}m_{3}-4\equiv 0 \mod 8 
\end{equation} 

\end{lemma}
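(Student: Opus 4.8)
The plan is to derive a single polynomial identity satisfied by $\cos\theta$, $\cos\phi$ and $\cos(\theta+\phi)$, substitute the given fractions, clear denominators, and then reduce the resulting integer equation modulo $8$. (I read the denominator of $\cos(\theta+\phi)$ as $2p_3$, matching the statement's mention of $p_3$.)

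\textbf{Step 1: the cosine identity.} From $\cos(\theta+\phi)=\cos\theta\cos\phi-\sin\theta\sin\phi$ we get $\sin\theta\sin\phi=\cos\theta\cos\phi-\cos(\theta+\phi)$. Squaring and using $\sin^2=1-\cos^2$ gives $(1-\cos^2\theta)(1-\cos^2\phi)=\big(\cos\theta\cos\phi-\cos(\theta+\phi)\big)^2$; expanding both sides and cancelling the common $\cos^2\theta\cos^2\phi$ term yields the classical relation
\[
\cos^2\theta+\cos^2\phi+\cos^2(\theta+\phi)-2\cos\theta\cos\phi\cos(\theta+\phi)=1 .
\]

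\textbf{Step 2: substitute and clear denominators.} Plugging in $\cos\theta=\frac{m_1}{2p_1}$, $\cos\phi=\frac{m_2}{2p_2}$, $\cos(\theta+\phi)=\frac{m_3}{2p_3}$ and multiplying through by $4p_1^2p_2^2p_3^2$ turns this into the integer equation
\[
m_1^2p_2^2p_3^2+m_2^2p_1^2p_3^2+m_3^2p_1^2p_2^2-m_1m_2m_3\,p_1p_2p_3-4p_1^2p_2^2p_3^2=0 .
\]

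\textbf{Step 3: reduce modulo $8$.} Each $p_i\equiv 1 \mod 8$, so $p_i^2\equiv 1$, every product $p_i^2p_j^2\equiv 1$, and $p_1p_2p_3\equiv 1$ modulo $8$; also $4p_1^2p_2^2p_3^2\equiv 4 \mod 8$ since $p_1^2p_2^2p_3^2$ is odd. Reducing the displayed equation modulo $8$ therefore collapses it to $m_1^2+m_2^2+m_3^2-m_1m_2m_3-4\equiv 0 \mod 8$, which is exactly (\ref{formula:cosadd}).

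I do not expect a real obstacle here: the only inputs are the trigonometric identity (standard, but worth writing out so the reduction is transparent) and the elementary facts that an odd square is $\equiv 1 \mod 8$ and $4$ times an odd number is $\equiv 4 \mod 8$. If anything, the delicate point is pure bookkeeping — making sure the factors of $2$ in the denominators produce exactly the clearing factor $4p_1^2p_2^2p_3^2$, and that the mixed term $2\cos\theta\cos\phi\cos(\theta+\phi)$ contributes $m_1m_2m_3p_1p_2p_3$ rather than twice that.
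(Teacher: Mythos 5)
Your proof is correct and follows essentially the same route as the paper: both eliminate the sine terms by squaring the addition formula, arrive at the identical integer equation $m_1^2p_2^2p_3^2+m_2^2p_1^2p_3^2+m_3^2p_1^2p_2^2-m_1m_2m_3p_1p_2p_3-4p_1^2p_2^2p_3^2=0$, and reduce modulo $8$ using $p_i\equiv 1$. Packaging the squaring step as the symmetric identity $\cos^2\theta+\cos^2\phi+\cos^2(\theta+\phi)-2\cos\theta\cos\phi\cos(\theta+\phi)=1$ before substituting is only a cosmetic reorganization of the paper's computation.
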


\begin{proof}

Using the cosine addition formula $\cos(\theta+\phi)=\cos(\theta)\cos(\phi)-\sin(\theta)\sin(\phi)$:
\begin{equation*}
	\frac{m_{3}}{2p_{3}}=\frac{m_{1}}{2p_{1}}\cdot\frac{m_{2}}{2p_{2}}-\left(\pm\frac{\sqrt{4p_{1}^2-m_{1}^2}}{2p_{1}}\right)\cdot\left(\pm\frac{\sqrt{4p_{2}^2-m_{2}^2}}{2p_{2}}\right)
\end{equation*}

\begin{equation*}
	(2m_{3}p_{1}p_{2}-m_{1}m_{2}p_{3})^2=p_{3}^2(4p_{1}^2-m_{1}^2)(4p_{2}^2-m_{2}^2)
\end{equation*}
\begin{equation*}
4m_{3}^2p_{1}^2p_{2}^2+m_{1}^2m_{2}^2p_{3}^2-4m_{1}m_{2}m_{3}p_{1}p_{2}p_{3}= 16p_{1}^2p_{2}^2p_{3}^2-4p_{1}^2m_{2}^2p_{3}^2-4m_{1}^2 p_{2}^2p_{3}^2+m_{1}^2m_{2}^2p_{3}^2
\end{equation*}
\begin{equation*}
p_{1}^2p_{2}^2m_{3}^2-p_{1}p_{2}p_{3}m_{1}m_{2}m_{3}-4p_{1}^2p_{2}^2p_{3}^2+ p_{1}^2m_{2}^2p_{3}^2+ m_{1}^2p_{2}^2p_{3}^2=0
\end{equation*}

Using that $p_1\equiv p_2 \equiv p_3\equiv 1 \mod 8$  we get (\ref{formula:cosadd}).

\end{proof}

Consider the solutions of (\ref{formula:cosadd}) in $\mathbb{Z}_8$. Clearly, every triple $(m_1,m_2,m_3)$ that is not a solution of this equation  encodes a forbidden change in the class when adding two angles. For example, since $(1,2,3)$ is not a solution, adding an angle of class 1 and an angle of class 2 cannot result in an angle of class 3.  The equation is symmetric in $m_{1},m_{2}$ and $m_{3}$. We will be later interested in solutions where one of the $m_i$-s is $1,3,5$ or $7$.  Checking every triple we find that these solutions are the following ones and the re-orderings of these: $(1, 1, 2)$,
	$(1, 1, 7)$,
	$(1, 2, 5)$,
	$(1, 3, 5)$,
	$(1, 3, 6)$,
	$(1, 6, 7)$,
	$(2, 3, 3)$,
	$(2, 3, 7)$,
	$(2, 5, 5)$,
	$(2, 7, 7)$,
	$(3, 3, 7)$,
	$(3, 5, 6)$,
	$(5, 5, 7)$,
	$(5, 6, 7)$,
	$(7, 7, 7)$.

\subsection*{Proof of main theorem}

The idea of the proof is simple, we want to show that $\sum\limits_{i=1}^n \theta_{i,i+1}$ is not a multiple of $2\pi$ using the fact that each $\theta_{i,i+1}$ is an angle of a triangle whose sides have odd length. This will not work this easily, for example both $\frac{\pi}{3}$ and $\frac{2\pi}{3}$ appears in triangles with odd sides and $\frac{\pi}{3}+\frac{\pi}{3}+\frac{\pi}{3}+\frac{\pi}{3}+\frac{2\pi}{3}=2\pi$. To reach a contradiction we will also use that the triangles in a wheel embedding share sides with their neighbours. 
  
\begin{proof}[of Theorem \ref{thm:mainwheel}]
    
Suppose there is a counterexample to Theorem \ref{thm:mainwheel}. From Lemma \ref{lemma:3mod8} we know that each $\theta_{i,i+1}$ has a unique residual. Let $\phi_1,\dots, \phi_{n}$ be a reordering of the angles $\theta_{1,2},\theta_{2,3},\dots, \theta_{n,1}$ in such a way that the angles of given residuals are consecutive. In general an arbitrary angle might not have any residual. The advantage of this ordering is that $\sum\limits_{i=1}^\ell \phi_i$ has a residual for each $\ell\in \{0,1,\dots,n\}$. To see this suppose that the residual of $\phi_\ell$ is $D$. From Lemma \ref{lemma:multofpiresi} we see that the $\phi_i$-s before $\phi_\ell$ whose residual is not $D$ sum up to an integer multiple of $\pi$. Thus, they do not affect the residual of $\sum\limits_{i=1}^\ell \phi_i $. Since angles that have residual $D$ are closed under addition, the rest sums up to an angle that has residual $D$. 

We also know that $D\equiv 3\mod 8$ from Lemma \ref{lemma:3mod8}. Hence by Lemma  \ref{lemma:froms} we know that  $\sum\limits_{i=1}^\ell \phi_i $ have a class for each $\ell\in \{0,1,\dots,n\}$. Consider how the class changes as $\ell$ goes from $0$ to $n$. 

In each step we increase the angle by some $\theta_{j,j+1}$. We have $\cos(\theta_{j,j+1})=\frac{r_j^2+r_{j+1}^2-r_{j,j+1}^2}{2r_jr_{j+1}}=\frac{(r_j^2+r_{j+1}^2-r_{j,j+1}^2)r_jr_{j+1}}{2r_j^2r_{j+1}^2}$. Since $r_j,r_{j+1}$ and $r_{j,j+1}$ are odd numbers, $(r_j^2+r_{j+1}^2-r_{j,j+1}^2) \equiv 1 \mod 8$ and $r_j^2r_{j+1}^2 \equiv 1 \mod 8$. Therefore the class of $\theta_{j,j+1}$ is the remainder of $r_jr_{j+1}$  by eight, which is either $1,3,5$ or $7$. We will use this fact in the following form. If $r_jr_{j+1}\equiv 1 \mod 4$, then the class of $\theta_{j,j+1}$ is either $1$ or $5$, and if $r_jr_{j+1}\equiv 3 \mod 4$, then the class of $\theta_{j,j+1}$ is either $3$ or $7$. Therefore, as we increase $\ell$ the angle $\sum\limits_{i=1}^\ell \phi_i$ changes either by an angle whose class is 1 or 5, or by an angle whose class is 3 or 7 depending on the remainder of $r_jr_{j+1}$ divided by 4.

Now we are ready to use Lemma \ref{lemma:classadition}. The solutions of (\ref{formula:cosadd}) have an underlying structure, that we can use. This is depicted in Figure \ref{fig:types}. We create a graph $G$ whose vertex set is $\{1,2,3,5,6,7\}$. For solutions of the form $(1,x,y)$ and $(5,x,y)$ we have connected $x$ and $y$ by a dashed edge. For solutions of the form $(3,x,y)$ and $(7,x,y)$ we have connected them by and solid edge, allowing loop edges. These two sets of edges are disjoint. Note that the dashed edges form a bipartite graph such that the solid edges connect vertices inside the two parts. This will allow us to use a parity argument, as any closed trail in this graph must use an even number of dashed edges.

	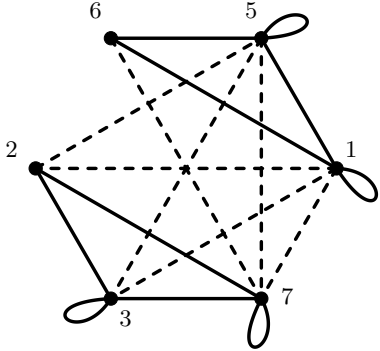
\begin{figure}[!ht]
		\centering
		\begin{tikzpicture}[line cap=round,line join=round,>=triangle 45,x=1.0cm,y=1.0cm]
		\clip(-1.6943074399048716,0.24406750691951654) rectangle (3.836255687938627,5.012309771195351);
		\draw (3.0,3.2) node[anchor=north west] {$1$};
		\draw (1.6688728405404991,5.05) node[anchor=north west] {$5$};
		\draw (-0.4,5.05) node[anchor=north west] {$6$};
		\draw (-1.5149378249477852,3.2) node[anchor=north west] {$2$};
		\draw (0.0,0.95) node[anchor=north west] {$3$};
		\draw (2.147191813759396,1.2455478570965728) node[anchor=north west] {$7$};
		\draw [line width=\lw] (-1.,2.732050807568879)-- (2.,1.);
		\draw [line width=\lw] (2.,1.)-- (0.,1.);
		\draw [line width=\lw] (0.,1.)-- (-1.,2.732050807568879);
		\draw [line width=\lw] (0.,4.464101615137755)-- (2.,4.4642);
		\draw [line width=\lw] (2.,4.464101615137755)-- (3.,2.7320508075688776);
		\draw [line width=\lw] (3.,2.7320508075688776)-- (0.,4.464101615137755);
		\draw [line width=\lw,dash pattern=on 3.0pt off 4.pt] (-1.,2.732050807568879)-- (2.,4.464101615137755);
		\draw [line width=\lw,dash pattern=on 3.0pt off 4.pt] (2.,4.464101615137755)-- (2.,1.);
		\draw [line width=\lw,dash pattern=on 3.0pt off 4.pt] (3.,2.7320508075688776)-- (-1.,2.732050807568879);
		\draw [line width=\lw,dash pattern=on 3.0pt off 4.pt] (2.,4.464101615137755)-- (0.,1.);
		\draw [line width=\lw,dash pattern=on 3.0pt off 4.pt] (3.,2.7320508075688776)-- (2.,1.);
		\draw [line width=\lw,dash pattern=on 3.0pt off 4.pt] (0.,4.464101615137755)-- (2.,1.);
		\draw [line width=\lw,dash pattern=on 3.0pt off 4.pt] (3.,2.7320508075688776)-- (0.,1.);
		\draw[line width=\lw,fill=black,fill opacity=0.10000000149011612] (-0.16966571276963688,-5.3014430888370185) -- (2.8198278698484707,-5.3014430888370185);
		\draw[line width=\lw] (0.,1.) -- (-0.009830031860551905,0.9845759104663253) -- (-0.01976372561904602,0.9696136308600719) -- (-0.029794927333832585,0.9551079490174387) -- (-0.03991748306326187,0.9410536527746246) -- (-0.05012523886568411,0.9274455299678286) -- (-0.060412040799449555,0.9142783684332494) -- (-0.07077173492290846,0.9015469560070861) -- (-0.08119816729441108,0.8892460805255373) -- (-0.09168518397230765,0.8773705298248022) -- (-0.10222663101494844,0.8659150917410796) -- (-0.1128163544806837,0.8548745541105682) -- (-0.12344820042786364,0.8442437047694669) -- (-0.13411601491483857,0.8340173315539746) -- (-0.14481364399995872,0.8241902223002905) -- (-0.15553493374157432,0.8147571648446131) -- (-0.16627373019803562,0.8057129470231414) -- (-0.17702387942769288,0.7970523566720745) -- (-0.1877792274888964,0.7887701816276108) -- (-0.19853362043999634,0.7808612097259496) -- (-0.209280904339343,0.7733202288032897) -- (-0.22001492524528668,0.7661420266958296) -- (-0.2307295292161775,0.7593213912397689) -- (-0.24141856231036585,0.752853110271306) -- (-0.25207587058620184,0.7467319716266396) -- (-0.26269530010203584,0.7409527631419691) -- (-0.2732706969162181,0.735510272653493) -- (-0.2837959070870988,0.7303992879974106) -- (-0.2942647766730282,0.7256145970099201) -- (-0.30467115173235654,0.7211509875272211) -- (-0.31500887832343416,0.7170032473855119) -- (-0.3252718025046113,0.7131661644209919) -- (-0.33545377033423807,0.7096345264698596) -- (-0.34554862787066476,0.706403121368314) -- (-0.3555502211722418,0.703466736952554) -- (-0.3654523962973192,0.7008201610587783) -- (-0.3752489993042474,0.6984581815231862) -- (-0.3849338762513765,0.6963755861819763) -- (-0.3945008731970569,0.6945671628713473) -- (-0.40394383619963875,0.6930276994274985) -- (-0.4132566113174724,0.6917519836866286) -- (-0.4224330446089079,0.6907348034849363) -- (-0.4314669821322957,0.6899709466586208) -- (-0.4403522699459859,0.6894552010438807) -- (-0.4490827541083289,0.689182354476915) -- (-0.4576522806776747,0.6891471947939227) -- (-0.4660546957123739,0.6893445098311024) -- (-0.4742838452707766,0.6897690874246535) -- (-0.482333575411233,0.6904157154107742) -- (-0.4901977321920934,0.6912791816256638) -- (-0.49787016167170794,0.6923542739055212) -- (-0.5053447099084271,0.6936357800865451) -- (-0.5126152229606008,0.6951184880049344) -- (-0.5196755468865797,0.6967971854968881) -- (-0.5265195277447136,0.6986666603986049) -- (-0.5331410115933531,0.700721700546284) -- (-0.5395338444908483,0.7029570937761241) -- (-0.5456918724955495,0.705367627924324) -- (-0.551608941665807,0.7079480908270827) -- (-0.5572788980599708,0.7106932703205989) -- (-0.5626955877363915,0.7135979542410716) -- (-0.5678528567534191,0.7166569304246998) -- (-0.572744551169404,0.7198649867076824) -- (-0.5773645170426963,0.7232169109262179) -- (-0.5817066004316463,0.7267074909165057) -- (-0.5857646473946045,0.7303315145147442) -- (-0.5895325039899207,0.7340837695571326) -- (-0.5930040162759456,0.7379590438798698) -- (-0.5961730303110291,0.7419521253191543) -- (-0.5990333921535216,0.7460578017111854) -- (-0.6015789478617733,0.750270860892162) -- (-0.6038035434941346,0.7545860906982826) -- (-0.6057010251089555,0.7589982789657463) -- (-0.6072652387645866,0.7635022135307521) -- (-0.6084900305193777,0.7680926822294987) -- (-0.6093692464316793,0.772764472898185) -- (-0.6098967325598418,0.77751237337301) -- (-0.6100663349622153,0.7823311714901725) -- (-0.6098718996971499,0.7872156550858715) -- (-0.6093072728229961,0.7921606119963056) -- (-0.6083663003981039,0.7971608300576738) -- (-0.6070428284808238,0.8022110971061751) -- (-0.605330703129506,0.8073062009780083) -- (-0.6032237704025005,0.8124409295093725) -- (-0.6007158763581579,0.8176100705364662) -- (-0.5978008670548283,0.8228084118954885) -- (-0.5944725885508619,0.8280307414226382) -- (-0.5907248869046089,0.8332718469541143) -- (-0.5865516081744199,0.8385265163261155) -- (-0.5819465984186447,0.8437895373748407) -- (-0.5769037036956338,0.8490556979364892) -- (-0.5714167700637374,0.8543197858472594) -- (-0.5654796435813058,0.8595765889433502) -- (-0.5590861703066891,0.8648208950609608) -- (-0.5522301962982376,0.8700474920362898) -- (-0.5449055676143018,0.8752511677055363) -- (-0.5371061303132316,0.8804267099048989) -- (-0.5288257304533774,0.8855689064705767) -- (-0.5200582140930895,0.8906725452387685) -- (-0.5107974272907181,0.8957324140456734) -- (-0.5010372161046134,0.9007433007274899) -- (-0.49077142659312567,0.9056999931204172) -- (-0.47999390481460524,0.9105972790606539) -- (-0.46869849682740233,0.915429946384399) -- (-0.4568790486898672,0.9201927829278516) -- (-0.44452940646035,0.9248805765272103) -- (-0.4316434161972011,0.929488115018674) -- (-0.4182149239587706,0.9340101862384417) -- (-0.404237775803409,0.9384415780227126) -- (-0.38970581778946634,0.9427770782076849) -- (-0.374612895975293,0.9470114746295577) -- (-0.35895285641923913,0.9511395551245301) -- (-0.34271954517965497,0.955156107528801) -- (-0.32590680831489083,0.9590559196785691) -- (-0.30850849188329693,0.9628337794100331) -- (-0.29051844194322357,0.9664844745593923) -- (-0.27193050455302104,0.9700027929628456) -- (-0.2527385257710394,0.9733835224565914) -- (-0.23293635165562904,0.9766214508768291) -- (-0.21251782826514024,0.9797113660597571) -- (-0.19147680165792316,0.9826480558415747) -- (-0.16980711789232808,0.9854263080584804) -- (-0.1475026230267053,0.9880409105466736) -- (-0.12455716311940501,0.9904866511423526) -- (-0.10096458422877749,0.9927583176817165) -- (-0.07671873241317297,0.9948506980009645) -- (-0.05181345373094172,0.996758579936295) -- (-0.02624259424043398,0.9984767513239075) -- (0.,1.);
		\draw[line width=\lw] (2.,1.) -- (2.0084426374361324,0.9837748979219143) -- (2.016433504794426,0.9676909269705167) -- (2.023980192911968,0.9517508105337089) -- (2.0310902926258425,0.9359572719993925) -- (2.037771394773135,0.9203130347554688) -- (2.044031090190931,0.9048208221898394) -- (2.049876969716316,0.889483357690406) -- (2.0553166241863754,0.87430336464507) -- (2.0603576444381937,0.8592835664417329) -- (2.065007621308857,0.8444266864682963) -- (2.06927414563545,0.8297354481126618) -- (2.0731648082550587,0.8152125747627305) -- (2.0766872000047685,0.8008607898064045) -- (2.079848911721664,0.7866828166315852) -- (2.0826575342428315,0.7726813786261738) -- (2.0851206584053545,0.7588591991780722) -- (2.087245875046321,0.7452190016751817) -- (2.0890407750028137,0.7317635095054039) -- (2.09051294911192,0.7184954460566406) -- (2.091669988210724,0.7054175347167929) -- (2.0925194831363116,0.6925324988737624) -- (2.0930690247257675,0.6798430619154509) -- (2.093326203816178,0.6673519472297598) -- (2.0932986112446272,0.6550618782045904) -- (2.092993837848202,0.6429755782278447) -- (2.092419474463986,0.6310957706874238) -- (2.091583111929066,0.6194251789712296) -- (2.090492341080526,0.6079665264671632) -- (2.089154752755452,0.5967225365631266) -- (2.08757793779093,0.5856959326470209) -- (2.085769487024044,0.5748894381067478) -- (2.083736991291881,0.5643057763302088) -- (2.0814880414315247,0.5539476707053057) -- (2.079030228280061,0.5438178446199398) -- (2.0763711426745752,0.5339190214620126) -- (2.073518375452153,0.5242539246194257) -- (2.070479517449879,0.5148252774800806) -- (2.06726215950484,0.5056358034318789) -- (2.0638738924541196,0.496688225862722) -- (2.0603223071348036,0.4879852681605116) -- (2.056614994383978,0.479529653713149) -- (2.0527595450387275,0.47132410590853585) -- (2.0487635499361376,0.4633713481345737) -- (2.044634599913294,0.45567410377916423) -- (2.0403802858072813,0.4482350962302088) -- (2.0360081984551854,0.441057048875609) -- (2.0315259286940908,0.43414268510326615) -- (2.026941067361084,0.42749472830108204) -- (2.0222612052932503,0.42111590185695813) -- (2.0174939333276742,0.4150089291587959) -- (2.0126468423014408,0.40917653359449696) -- (2.0077275230516367,0.40362143855196286) -- (2.0027435664153463,0.398346367419095) -- (1.9977025632296548,0.393354043583795) -- (1.992612104331648,0.38864719043396434) -- (1.9874797805584112,0.38422853135750457) -- (1.9823131827470295,0.3801007897423173) -- (1.9771199017345884,0.376266688976304) -- (1.9719075283581735,0.3727289524473662) -- (1.9666836534548697,0.36949030354340534) -- (1.9614558678617624,0.36655346565232316) -- (1.9562317624159367,0.363921162162021) -- (1.9510189279544783,0.36159611646040046) -- (1.9458249553144724,0.3595810519353631) -- (1.9406574353330046,0.3578786919748104) -- (1.9355239588471598,0.35649175996664395) -- (1.9304321166940235,0.35542297929876526) -- (1.9253894997106813,0.35467507335907583) -- (1.9204036987342177,0.3542507655354772) -- (1.9154823046017193,0.35415277921587096) -- (1.9106329081502702,0.3543838377881585) -- (1.9058631002169566,0.35494666464024155) -- (1.9011804716388636,0.3558439831600214) -- (1.896592613253076,0.3570785167353998) -- (1.8921071158966796,0.35865298875427826) -- (1.8877315704067597,0.3605701226045582) -- (1.8834735676204017,0.3628326416741411) -- (1.8793406983746905,0.36544326935092875) -- (1.8753405535067122,0.3684047290228224) -- (1.8714807238535516,0.3717197440777238) -- (1.8677688002522939,0.3753910379035343) -- (1.8642123735400247,0.3794213338881556) -- (1.8608190345538294,0.38381335541948913) -- (1.8575963741307933,0.38856982588543637) -- (1.8545519831080015,0.3936934686738989) -- (1.8516934523225395,0.39918700717277833) -- (1.8490283726114924,0.4050531647699761) -- (1.8465643348119458,0.41129466485339383) -- (1.8443089297609854,0.417914230810933) -- (1.8422697482956956,0.4249145860304951) -- (1.840454381253162,0.43229845389998167) -- (1.8388704194704704,0.44006855780729426) -- (1.837525453784706,0.44822762114033443) -- (1.8364270750329537,0.4567783672870037) -- (1.8355828740522995,0.46572351963520375) -- (1.835000441679828,0.47506580157283573) -- (1.834687368752625,0.4848079364878014) -- (1.8346512461077757,0.49495264776800235) -- (1.8348996645823654,0.50550265880134) -- (1.8354402150134794,0.5164606929757161) -- (1.8362804882382031,0.5278294736790318) -- (1.837428075093622,0.539611724299189) -- (1.8388905664168214,0.5518101682240888) -- (1.8406755530448862,0.5644275288416333) -- (1.842790625814902,0.5774665295397237) -- (1.8452433755639543,0.5909298937062614) -- (1.8480413931291282,0.6048203447291481) -- (1.851192269347509,0.6191406059962855) -- (1.8547035950561819,0.6338934008955748) -- (1.8585829610922324,0.6490814528149178) -- (1.8628379582927466,0.6647074851422159) -- (1.8674761774948085,0.6807742212653706) -- (1.8725052095355044,0.6972843845722835) -- (1.877932645251919,0.714240698450856) -- (1.883766075481138,0.73164588628899) -- (1.8900130910602466,0.7495026714745865) -- (1.8966812828263304,0.7678137773955476) -- (1.9037782416164741,0.7865819274397743) -- (1.9113115582677636,0.8058098449951685) -- (1.9192888236172838,0.8255002534496315) -- (1.9277176285021205,0.845655876191065) -- (1.9366055637593587,0.8662794366073705) -- (1.945960220226084,0.8873736580864492) -- (1.9557891887393815,0.9089412640162033) -- (1.966100060136336,0.9309849777845338) -- (1.9769004252540343,0.9535075227793425) -- (1.98819787492956,0.9765116223885306) -- (2.,1.);
		\draw[line width=\lw] (3.,2.7320508075688776) -- (3.0182726692966844,2.7312497950244667) -- (3.0361972304134723,2.7301281036793226) -- (3.0537751202458012,2.728693669085148) -- (3.071007775689104,2.7269544267936454) -- (3.087896633638819,2.7249183123565177) -- (3.104443130990381,2.7225932613254678) -- (3.1206487046392244,2.7199872092521975) -- (3.1365147914807863,2.7171080916884103) -- (3.1520428284105013,2.7139638441858085) -- (3.1672342523238055,2.7105624022960937) -- (3.182090500116134,2.7069117015709714) -- (3.196613008682922,2.7030196775621413) -- (3.2108032149196073,2.698894265821307) -- (3.224662555721623,2.694543401900172) -- (3.238192467984405,2.6899750213504383) -- (3.2513943886033907,2.6851970597238077) -- (3.2642697544740136,2.6802174525719846) -- (3.2768200024917102,2.6750441354466705) -- (3.2890465695519167,2.6696850438995683) -- (3.300950892550067,2.664148113482381) -- (3.312534408381598,2.65844127974681) -- (3.3237985539419457,2.652572478244559) -- (3.3347447661265437,2.6465496445273318) -- (3.345374481830829,2.6403807141468283) -- (3.3556891379502374,2.634073622654753) -- (3.365690171380204,2.6276363056028083) -- (3.3753790190161648,2.6210766985426965) -- (3.384757117753554,2.6144027370261202) -- (3.393825904487809,2.6076223566047827) -- (3.4025868161143644,2.600743492830386) -- (3.4110412895286557,2.5937740812546335) -- (3.41919076162612,2.5867220574292267) -- (3.4270366693021894,2.5795953569058687) -- (3.434580449452303,2.5724019152362634) -- (3.441823538971895,2.5651496679721117) -- (3.448767374756401,2.557846550665117) -- (3.455413393701256,2.5505004988669815) -- (3.4617630327018967,2.5431194481294086) -- (3.467817728653758,2.535711334004101) -- (3.4735789184522767,2.5282840920427607) -- (3.4790480389928864,2.52084565779709) -- (3.4842265271710238,2.5134039668187924) -- (3.489115819882124,2.5059669546595704) -- (3.4937173540216233,2.4985425568711266) -- (3.4980325664849565,2.4911387090051633) -- (3.50206289416756,2.4837633466133835) -- (3.5058097739648684,2.47642440524749) -- (3.5092746427723176,2.4691298204591847) -- (3.512458937485344,2.4618875278001715) -- (3.5153640949993825,2.4547054628221523) -- (3.5179915522098684,2.447591561076829) -- (3.5203427460122376,2.4405537581159056) -- (3.5224191133019254,2.4335999894910842) -- (3.5242220909743693,2.426738190754067) -- (3.525753115925001,2.4199762974565577) -- (3.5270136250492596,2.413322245150258) -- (3.5280050552425797,2.4067839693868702) -- (3.528728843400396,2.4003694057180986) -- (3.5291864264181445,2.394086489695644) -- (3.5293792411912612,2.387943156871211) -- (3.529308724615182,2.3819473427965008) -- (3.528976313585341,2.3761069830232158) -- (3.528383444997175,2.37043001310306) -- (3.527531555746119,2.364924368587735) -- (3.5264220827276094,2.3595979850289437) -- (3.525056462837081,2.3544587979783884) -- (3.5234361329699695,2.3495147429877727) -- (3.5215625300217104,2.3447737556087986) -- (3.51943709088774,2.340243771393169) -- (3.517061252463493,2.3359327258925866) -- (3.5144364516444053,2.331848554658753) -- (3.5115641253259122,2.3279991932433726) -- (3.5084457104034508,2.3243925771981466) -- (3.505082643772454,2.3210366420747786) -- (3.501476362328359,2.3179393234249703) -- (3.4976283029666018,2.3151085568004253) -- (3.493539902582617,2.3125522777528458) -- (3.489212598071841,2.310278421833935) -- (3.4846478263297085,2.308294924595394) -- (3.4798470242516557,2.306609721588927) -- (3.474811628733118,2.3052307483662364) -- (3.469543076669531,2.3041659404790247) -- (3.46404280495633,2.303423233478994) -- (3.458312250488952,2.303010562917848) -- (3.4523528501628307,2.3029358643472877) -- (3.446166040873402,2.3032070733190175) -- (3.439753259516102,2.303832125384739) -- (3.4331159429863662,2.3048189560961556) -- (3.426255528179631,2.3061755010049696) -- (3.41917345199133,2.3079096956628833) -- (3.4118711513169,2.3100294756215995) -- (3.404350063051777,2.3125427764328212) -- (3.3966116240913955,2.315457533648251) -- (3.3886572713311924,2.318781682819591) -- (3.3804884416666017,2.3225231594985445) -- (3.3721065719930596,2.326689899236814) -- (3.3635130992060027,2.3312898375861018) -- (3.3547094602008656,2.3363309100981113) -- (3.3456970918730837,2.3418210523245446) -- (3.3364774311180936,2.3477681998171036) -- (3.327051914831329,2.3541802881274925) -- (3.317421979908228,2.3610652528074123) -- (3.307589063244224,2.3684310294085678) -- (3.2975546017347535,2.376285553482659) -- (3.287320032275252,2.384636760581391) -- (3.276886791761156,2.3934925862564644) -- (3.266256317087899,2.4028609660595843) -- (3.255430045150918,2.412749835542451) -- (3.2444094128456484,2.4231671302567674) -- (3.233195857067526,2.4341207857542377) -- (3.2217908147119863,2.445618737586563) -- (3.2101957226744644,2.457668921305447) -- (3.1984120178503956,2.4702792724625917) -- (3.186441137135216,2.483457726609701) -- (3.174284517424362,2.4972122192984743) -- (3.161943595613268,2.5115506860806187) -- (3.14941980859737,2.526481062507834) -- (3.136714593272103,2.542011284131823) -- (3.123829386532903,2.5581492865042894) -- (3.110765625275207,2.5749030051769344) -- (3.097524746394449,2.592280375701462) -- (3.084108186786064,2.6102893336295745) -- (3.070517383345489,2.628937814512974) -- (3.056753772968159,2.6482337539033645) -- (3.042818792549509,2.6681850873524473) -- (3.0287138789849757,2.688799750411925) -- (3.0144404691699944,2.710085678633501) -- (3.,2.7320508075688776);
		\draw[line width=\lw] (2.,4.464101615137755) -- (2.0098300318605524,4.479525704671429) -- (2.0197637256190464,4.494487984277684) -- (2.029794927333833,4.508993666120317) -- (2.039917483063262,4.523047962363131) -- (2.050125238865684,4.536656085169927) -- (2.0604120407994495,4.5498232467045066) -- (2.070771734922909,4.562554659130669) -- (2.0811981672944113,4.574855534612218) -- (2.091685183972308,4.586731085312953) -- (2.1022266310149487,4.5981865233966746) -- (2.112816354480684,4.609227061027187) -- (2.123448200427864,4.619857910368288) -- (2.134116014914839,4.6300842835837805) -- (2.144813643999959,4.639911392837465) -- (2.155534933741575,4.6493444502931425) -- (2.166273730198036,4.658388668114614) -- (2.177023879427693,4.66704925846568) -- (2.187779227488897,4.675331433510145) -- (2.1985336204399966,4.683240405411806) -- (2.2092809043393435,4.690781386334466) -- (2.2200149252452874,4.697959588441926) -- (2.230729529216178,4.704780223897986) -- (2.241418562310366,4.711248504866449) -- (2.2520758705862023,4.717369643511116) -- (2.2626953001020365,4.723148851995786) -- (2.2732706969162186,4.728591342484262) -- (2.2837959070870997,4.733702327140345) -- (2.294264776673029,4.738487018127835) -- (2.3046711517323573,4.742950627610534) -- (2.315008878323435,4.747098367752243) -- (2.325271802504612,4.750935450716764) -- (2.3354537703342384,4.754467088667896) -- (2.3455486278706656,4.757698493769441) -- (2.3555502211722423,4.760634878185202) -- (2.36545239629732,4.763281454078976) -- (2.3752489993042483,4.765643433614569) -- (2.3849338762513774,4.767726028955779) -- (2.3945008731970576,4.769534452266408) -- (2.4039438361996397,4.771073915710256) -- (2.413256611317473,4.772349631451126) -- (2.4224330446089084,4.773366811652819) -- (2.4314669821322967,4.774130668479135) -- (2.4403522699459868,4.7746464140938745) -- (2.44908275410833,4.77491926066084) -- (2.4576522806776757,4.774954420343832) -- (2.4660546957123746,4.774757105306652) -- (2.474283845270778,4.774332527713101) -- (2.482333575411234,4.773685899726981) -- (2.4901977321920943,4.772822433512092) -- (2.4978701616717087,4.771747341232234) -- (2.505344709908428,4.77046583505121) -- (2.512615222960602,4.76898312713282) -- (2.5196755468865804,4.767304429640867) -- (2.5265195277447146,4.76543495473915) -- (2.5331410115933544,4.763379914591471) -- (2.5395338444908493,4.761144521361631) -- (2.54569187249555,4.75873398721343) -- (2.5516089416658074,4.7561535243106725) -- (2.5572788980599723,4.7534083448171565) -- (2.5626955877363926,4.750503660896683) -- (2.5678528567534205,4.747444684713056) -- (2.572744551169405,4.744236628430071) -- (2.577364517042697,4.7408847042115365) -- (2.5817066004316476,4.737394124221249) -- (2.5857646473946057,4.733770100623012) -- (2.589532503989922,4.7300178455806225) -- (2.5930040162759465,4.726142571257885) -- (2.59617303031103,4.722149489818601) -- (2.5990333921535225,4.718043813426569) -- (2.6015789478617743,4.713830754245593) -- (2.603803543494135,4.709515524439473) -- (2.6057010251089565,4.705103336172009) -- (2.6072652387645876,4.700599401607003) -- (2.6084900305193788,4.696008932908256) -- (2.6093692464316804,4.69133714223957) -- (2.609896732559843,4.686589241764745) -- (2.610066334962216,4.681770443647583) -- (2.6098718996971506,4.676885960051884) -- (2.6093072728229973,4.67194100314145) -- (2.608366300398105,4.666940785080081) -- (2.6070428284808247,4.66189051803158) -- (2.605330703129507,4.656795414159747) -- (2.6032237704025016,4.651660685628382) -- (2.600715876358159,4.6464915446012895) -- (2.597800867054829,4.641293203242266) -- (2.594472588550863,4.636070873715117) -- (2.59072488690461,4.6308297681836414) -- (2.5865516081744206,4.625575098811639) -- (2.5819465984186456,4.6203120777629145) -- (2.576903703695635,4.615045917201266) -- (2.5714167700637383,4.609781829290496) -- (2.565479643581307,4.604525026194404) -- (2.5590861703066903,4.599280720076795) -- (2.5522301962982388,4.594054123101466) -- (2.5449055676143026,4.588850447432218) -- (2.5371061303132323,4.583674905232857) -- (2.528825730453378,4.578532708667177) -- (2.52005821409309,4.573429069898986) -- (2.510797427290719,4.568369201092082) -- (2.501037216104614,4.563358314410266) -- (2.4907714265931267,4.558401622017338) -- (2.4799939048146062,4.553504336077101) -- (2.468698496827403,4.5486716687533555) -- (2.456879048689868,4.543908832209904) -- (2.444529406460351,4.539221038610545) -- (2.431643416197202,4.534613500119081) -- (2.418214923958771,4.530091428899313) -- (2.4042377758034097,4.525660037115043) -- (2.3897058177894666,4.52132453693007) -- (2.374612895975294,4.517090140508198) -- (2.35895285641924,4.5129620600132245) -- (2.3427195451796554,4.508945507608954) -- (2.3259068083148913,4.505045695459187) -- (2.3085084918832974,4.501267835727722) -- (2.290518441943224,4.497617140578363) -- (2.2719305045530214,4.49409882217491) -- (2.2527385257710395,4.490718092681163) -- (2.2329363516556295,4.487480164260926) -- (2.2125178282651405,4.484390249077998) -- (2.1914768016579234,4.481453559296181) -- (2.1698071178923284,4.478675307079275) -- (2.1475026230267056,4.476060704591082) -- (2.124557163119405,4.473614963995403) -- (2.1009645842287776,4.4713432974560385) -- (2.0767187324131733,4.469250917136792) -- (2.051813453730942,4.467343035201461) -- (2.026242594240434,4.465624863813847) -- (2.,4.464101615137755);
		\begin{scriptsize}
		\draw [fill=black] (0.,1.) circle (2.5pt);
		\draw [fill=black] (2.,1.) circle (2.5pt);
		\draw [fill=black] (3.,2.7320508075688776) circle (2.5pt);
		\draw [fill=black] (2.,4.464101615137755) circle (2.5pt);
		\draw [fill=black] (0.,4.464101615137755) circle (2.5pt);
		\draw [fill=black] (-1.,2.732050807568879) circle (2.5pt);
		\draw [fill=black] (1.6240304368012277,-5.3014430888370185) circle (2.5pt);
		\draw[color=black] (-1.6046226324263282,5.214100588022071) node {$a = 0.6$};
		\end{scriptsize}
		\end{tikzpicture}
		\caption{Possible changes in the class when adding an angle of class $1$, $3$, $5$ or $7$.}
		\label{fig:types}
	\end{figure}
		
	Now we are ready to finish the proof. Let $T$ be the trail of length $n+1$ in $G$ whose $\ell$-th vertex is the class of $\sum\limits_{i=1}^{\ell-1} \phi_i $. We know that  $\sum\limits_{i=1}^n \phi_i  $ is a integer multiple of $2\pi$, so  $\cos(\sum\limits_{i=1}^n \phi_i)=\frac{2}{2\cdot 1}$.  Hence the trail should start and end at 2. By Lemma \ref{lemma:classadition} when the class of $\phi_i$ is 1 or 5, we follow one of the solid edges, if the class is 3 or 7, we follow a dashed edge. 
	
	Finally, we show that we followed a dashed edge an odd number of times. Considering the equation $(r_1r_2)(r_2r_3)\cdots(r_{n-1}r_{n})(r_{n}r_1)=(\prod r_i)^2\equiv 1 \mod 4$ we have $r_ir_{i+1}\equiv 3 \mod 4$ for an even number of $i$-s. Since $n$ is odd, this implies that we have an odd number of $i$-s when $r_ir_{i+1}\equiv 1 \mod 4$. Hence the trail contains an odd number of dashed edges. Since the dashed edges form a bipartite graph and the solid edges connect vertices inside the two parts the trail cannot end where it started, a contradiction. This shows that a counterexample to Theorem \ref{thm:mainwheel} cannot exists.

\end{proof}

\section{Final remarks}

We note that some parts of the proof can be replaced by other arguments. For example Lemma \ref{lemma:classadition} also follows from the analysis of Cayley-Menger determinants.  

The main goal of understanding odd-distance graphs is to determine the chromatic number of $\godd$. Odd wheels are the simplest graphs that are not 3-colorable, yet they are not odd-distance graphs. Our proof heavily relies on the fact that a wheel graph contains many triangles. An other nice question of Rosenfeld and Nam L\^{e} Tien \cite{MR3159068} is the following. Are there triangle-free graphs that are not odd-distance graphs?

Piepemeyer's construction which shows that $K_{n,n,n}$ is an odd-distance graph comes from an integral point set. Naturally, one might be tempted to look for odd-distance graphs with high chromatic number in a similar way. Take an integral point set and then consider the odd-distance graph given by the edges of odd length. We note that this method cannot lead to success since the chromatic number of these graphs is at most 3. We leave the proof of this statement to the interested readers.  
 
    We can also consider the natural analog of Harborth's conjecture. Which planar graphs have a planar drawing where the length of the edges are odd integers?
    
    Take for example a maximal planar graph, in other words a triangulation. If it contains an odd wheel, it is not an odd-distance graph. On the other hand if it does not contain an odd wheel, it is 3-colorable. Hence it is an odd distance graph, but this does not imply that we can find an plane drawing without crossings.  Is it true that all 3-colorable planar graphs have an embedding without crossings where the length of the edges are odd integers?

\section{Acknowledgement}	
We  would like to thank	SciExperts for providing free access to the software Wolfram Mathematica, and therefore to the database of Ed Pegg Jr. \cite{eddpeg} on embeddings of wheels. We also thank D\"om\"ot\"or P\'alv\"olgyi and our anonymous reviewers for valuable suggestions and encouragement.

 \bibliographystyle{splncs04}
 \bibliography{wheels}
\end{document}